\newtheorem{theorem}{Theorem}[section]
\newtheorem{lemma}[theorem]{Lemma}
\newtheorem{prop}[theorem]{Proposition}
\numberwithin{equation}{section}
\newcommand{\F}{\mathbb{F}}
\newcommand{\Z}{\mathbb{Z}}
\newcommand{\Q}{\mathbb{Q}}
\newcommand{\FP}{{\rm{FP}}}
\newcommand\ann{{\rm{ann}}}
\newcommand\Ass{{\rm{Ass}}}
\newcommand\Tor{{\rm{Tor}}}
\newcommand\rk{{\rm{rk}}\, }
\newcommand{\into}{\rightarrowtail} 
\newcommand{\onto}{\twoheadrightarrow} 
\def\ns{\vartriangleleft} 
\newtheorem{thmA}{Theorem} 
\newtheorem{propA}[thmA]{Proposition} 
\newtheorem{corA}[thmA]{Corollary}
\begin{document}

\title{The torsion-free rank of homology in towers of soluble pro-$p$ groups}

\author{Martin R. Bridson}

\address{Mathematical Institute,
Andrews Wiles Building,
University of Oxford,
Oxford OX2 6GG, 
UK} 
\email{bridson@maths.ox.ax.uk}

\author{Dessislava H. Kochloukova}

\address
{Department of Mathematics, State University of Campinas (UNICAMP), 
13083-859, Campinas, SP, Brazil} 
\email{desi@ime.unicamp.br}

\subjclass[2000]{Primary 20J05 Secondary 20E18 }

\date{Oxford, 11 April 2016}

\keywords{pro-p group, virtual first betti number}

\begin{abstract} We show that for every finitely presented pro-$p$ nilpotent-by-abelian-by-finite group $G$  there is an upper bound on $\dim_{\mathbb{Q}_p} (H_1(M, \mathbb{Z}_p) \otimes_{\mathbb{Z}_p} \mathbb{Q}_p )$, as $M$ runs through all pro-$p$ subgroups of finite index in $G$.
\end{abstract}

\maketitle

\section{Introduction}In \cite{BK} we examined how betti numbers behave in towers of finitely presented soluble groups and proved that if
$G$ is  finitely presented  and nilpotent-by-abelian-by-finite, then there is an upper bound on $\dim_{\mathbb{Q}} H_1(M, \mathbb{Q})$, as $M$ runs through all subgroups of finite index in $G$. In this paper we shall prove the corresponding result for  pro-$p$ groups.

 By definition, a
  pro-$p$ group $G$ is finitely presented if $G \cong F / R$, where $F$ is a finitely generated free pro-$p$ group  and $R$ is a pro-$p$ subgroup of $F$ generated 
(as a topological group) by the union of finitely many conjugacy classes in $F$. 
For discrete groups, one cannot detect finite presentability using homological conditions, but
for  pro-$p$ groups one can: $G$ is  finitely presented if and only if the homology groups $H_1(G,\Z_p)$ and  $H_2(G,\Z_p)$ are finitely generated. 
Moreover,  the number of elements required
to generate $G$ as a topological group is $d(G):=\dim_{\F_p} H_1(G, \F_p)$.

In \cite{JSWilson} John Wilson proved that the Golod-Shafarevich inequality holds for soluble pro-$p$ groups. Using this,
 he proved \cite[Corollary A,(i)]{JSWilson} that for every
finitely presented soluble pro-$p$ group $G$ there is a
constant $k > 0$ such that for any pro-$p$ subgroup $U$  of finite index 
in $G$, the inequality  $d(U) \leq k [G : U]^{1/2}$ holds. He also proved that
every normal  pro-$p$ subgroup with quotient 
$\mathbb{Z}_p$  is finitely generated \cite[Corollary A, (ii)]{JSWilson}. Note how this contrasts with the discrete case:
there are many finitely presented solvable groups $N\rtimes\Z$ with $N$ not finitely generated. (In the
positive direction, 
Bieri, Neumann and Strebel \cite[Thm.~D]{BNS} proved that if  $G$ is finitely presented and soluble group with 
$\rk(G/G')\ge 2$, then
there {\em{exists}} a normal finitely generated  subgroup $N$ such that  $G / N\cong\mathbb{Z}$.)

Our first result   generalizes \cite[Cor.~A,(ii)]{JSWilson}.

\begin{propA} Let  $G$ be a finitely presented soluble pro-$p$ group. Then 
$$
\sup_{G/ H \cong \mathbb{Z}_p} d(H) < \infty.
$$
\end{propA}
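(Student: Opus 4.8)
The plan is to reduce the statement to the structure of a single module and then to calibrate two of Wilson's results against each other. Fix a continuous surjection $\phi\colon G\to\mathbb{Z}_p$ with kernel $H$, choose a topological generator $\gamma$ of $\mathbb{Z}_p$, and set $t=\gamma-1$, so that the completed group algebra $\Lambda:=\mathbb{F}_p[[\mathbb{Z}_p]]$ is identified with the complete discrete valuation ring $\mathbb{F}_p[[t]]$. Conjugation endows $V:=H_1(H,\mathbb{F}_p)$ with the structure of a $\Lambda$-module, and by definition $d(H)=\dim_{\mathbb{F}_p}V$. By Wilson's Corollary~A(ii) the group $H$ is finitely generated, so $V$ is finite-dimensional over $\mathbb{F}_p$; since the $\mathbb{Z}_p$-action on the finite module $V$ is continuous, some $\gamma^{p^s}$ acts trivially, i.e. $t^{p^s}V=0$. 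Let $m\ge1$ be the least integer with $t^mV=0$. A Jordan-chain argument for the nilpotent operator $t$ then shows $\dim_{\mathbb{F}_p}V\ge m$.

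Next I would exploit the finite-index subgroups $G_n:=\phi^{-1}(p^n\mathbb{Z}_p)$ for $n\ge0$. Each is open of index $p^n$ in $G$, contains $H$ as a normal subgroup, and satisfies $G_n/H\cong\mathbb{Z}_p$ with topological generator the image of $\gamma^{p^n}$. Since $\mathbb{Z}_p$ is free pro-$p$ of rank one we have $H_2(\mathbb{Z}_p,\mathbb{F}_p)=0$, so the Lyndon--Hochschild--Serre five-term homology sequence for $1\to H\to G_n\to\mathbb{Z}_p\to1$ collapses to a short exact sequence $0\to V_{G_n/H}\to H_1(G_n,\mathbb{F}_p)\to\mathbb{F}_p\to0$, giving $\dim_{\mathbb{F}_p}V_{G_n/H}=d(G_n)-1$. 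The coinvariants are $V_{G_n/H}=V/(\gamma^{p^n}-1)V$, and in characteristic $p$ one has the Frobenius identity $\gamma^{p^n}-1=(1+t)^{p^n}-1=t^{p^n}$; hence $V_{G_n/H}=V/t^{p^n}V$ and $\dim_{\mathbb{F}_p}V/t^{p^n}V=d(G_n)-1$.

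The final step is the calibration. As soon as $p^n\ge m$ the operator $t^{p^n}$ kills $V$, so $V/t^{p^n}V=V$ and $\dim_{\mathbb{F}_p}V=d(G_n)-1$. I would apply this with $n$ chosen minimal subject to $p^n\ge m$, for which $p^n<pm$, and insert Wilson's Golod--Shafarevich bound $d(G_n)\le k\,[G:G_n]^{1/2}=k\,p^{n/2}$ from his Corollary~A(i), where $k$ depends only on $G$. Writing $L:=\dim_{\mathbb{F}_p}V$ and using $m\le L$, this gives $L\le k\,p^{n/2}<k\,(pm)^{1/2}\le k\,(pL)^{1/2}$, whence $\sqrt{L}<k\sqrt{p}$ and $L<k^2p$. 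Thus $d(H)=\dim_{\mathbb{F}_p}V<k^2p$ uniformly in $\phi$, which is the assertion.

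I expect the one genuinely delicate point to be this last calibration. Taken on its own, the Golod--Shafarevich estimate $d(G_n)\le k\,p^{n/2}$ grows with $n$ and yields nothing uniform; what rescues the argument is the characteristic-$p$ identity $\gamma^{p^n}-1=t^{p^n}$, which makes the coinvariants $V/t^{p^n}V$ stabilize to all of $V$ precisely when $p^n$ reaches the nilpotency index $m$. Choosing $n$ as small as possible then trades the unbounded quantity $p^{n/2}$ against the lower bound $m\le\dim_{\mathbb{F}_p}V$ and collapses everything to the single constant $k^2p$.
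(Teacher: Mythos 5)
Your argument is correct and is essentially the paper's own proof: both reduce to the finite $\mathbb{F}_p[[t]]$-module $V=H/H'H^p$ (finite by Wilson's Corollary A(ii)), pass to the finite-index subgroup above $H$ generated by $\gamma^{p^n}$ with $p^n$ just exceeding the nilpotency index of $t$, use $(1+t)^{p^n}-1=t^{p^n}$ to see the coinvariants equal all of $V$, and calibrate against Wilson's bound $d(U)\le k[G:U]^{1/2}$ to land on the same constant $k^2p$. The only cosmetic differences are that you invoke the five-term sequence with $H_2(\mathbb{Z}_p,\mathbb{F}_p)=0$ where the paper computes $U/U'U^p$ directly from the splitting, and you use the nilpotency index with a Jordan-chain bound where the paper uses the cyclic decomposition over the PID $\mathbb{F}_p[[t]]$.
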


The following result forms the technical heart of this paper. It is proved using methods from commutative algebra. It
will enable us
to retain the finiteness in homology given by Proposition A as we extend scalars. We remind the
reader that for a pro-$p$ group $G$ which is the inverse limit of its finite $p$-group quotients $G/U$,
homology  is defined in terms of modules over
the completed group algebra $\mathbb{Z}_p[[G]]$, which is the inverse limit  of the rings $\mathbb{Z}_p [G/U]$. Note
that $\mathbb{Z}_p[[G]]$ is a local ring whose unique maximal ideal is the kernel of the natural morphism
$\mathbb{Z}_p[[G]]\twoheadrightarrow\F_p$.

The {\em torsion-free rank} of an abelian pro-$p$ group $B$ is
$$
\rk B \ := \dim_{\mathbb{Q}_p} B {\otimes}_{\mathbb{Z}_p}\Q_p.
$$

\begin{propA}
Let $Q$ be a free abelian pro-$p$ group of finite rank and let   $A$
be a finitely generated pro-$p$ $\mathbb{Z}_p[[Q]]$-module.  
Let ${\mathcal B}$ be the set of pro-$p$ subgroups
$H \ns Q$ with $Q / H \cong \mathbb{Z}_p$. If
$
\sup_{H \in {\mathcal B}} \dim_{\mathbb{F}_p} (A {\otimes}_{\mathbb{Z}_p[[H]]} \mathbb{F}_p) < \infty, $ then
$$
\sup_{s\geq 1} \rk( A {\otimes}_{\mathbb{Z}_p
  [[Q^{p^s}]]} \mathbb{Z}_p ) < \infty.$$
 \end{propA}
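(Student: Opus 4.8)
The plan is to translate the statement into commutative algebra over the Iwasawa algebra and to read both sides off the support of $A$. Fix topological generators $q_1,\dots,q_n$ of $Q$ and identify $\Lambda:=\mathbb{Z}_p[[Q]]$ with the power series ring $\mathbb{Z}_p[[T_1,\dots,T_n]]$ via $q_i\mapsto 1+T_i$; this is a complete regular local Noetherian domain of dimension $n+1$. Since $\mathbb{Z}_p=\mathbb{Z}_p[[Q^{p^s}]]/I_{Q^{p^s}}$ and the augmentation ideal $I_{Q^{p^s}}$ generates the ideal $(\omega_s(T_1),\dots,\omega_s(T_n))$ of $\Lambda$, where $\omega_s(T):=(1+T)^{p^s}-1$, one has $A\otimes_{\mathbb{Z}_p[[Q^{p^s}]]}\mathbb{Z}_p=A/(\omega_s(T_1),\dots,\omega_s(T_n))A$. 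As $\mathbb{Q}_p$ is $\mathbb{Z}_p$-flat, put $V:=A\otimes_{\mathbb{Z}_p}\mathbb{Q}_p$, a finitely generated module over the regular domain $\mathcal{R}:=\Lambda[1/p]$ of dimension $n$, and $J_s:=(\omega_s(T_1),\dots,\omega_s(T_n))\mathcal{R}$. The quantity to be bounded is then $\rk(A\otimes_{\mathbb{Z}_p[[Q^{p^s}]]}\mathbb{Z}_p)=\dim_{\mathbb{Q}_p}V/J_sV$, so the whole problem is recast as a statement about the $\mathcal{R}$-module $V$.

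By the topological form of Nakayama's lemma, the hypothesis $\dim_{\mathbb{F}_p}(A\otimes_{\mathbb{Z}_p[[H]]}\mathbb{F}_p)\le d$ says exactly that $A$ is generated by at most $d$ elements as a $\mathbb{Z}_p[[H]]$-module, for every $H\in\mathcal{B}$. Fixing one such $H$, finiteness of $A$ over the subring $\mathbb{Z}_p[[H]]$, which has dimension $n$ (one less than $\dim\Lambda$), forces $\dim_{\Lambda}A\le n$; since $\Lambda$ is a domain of dimension $n+1$, this already shows $A$ is $\Lambda$-torsion and hence that $V$ is a torsion $\mathcal{R}$-module, so $W:=\mathrm{Supp}_{\mathcal{R}}(V)$ has dimension $\le n-1$. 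I then interpret $\mathrm{Spec}\,\mathcal{R}$ as the character variety of $Q$: a closed point is (a Galois orbit of) a continuous character $\chi\colon Q\to\overline{\mathbb{Q}}_p^{\times}$ with $T_i\mapsto\chi(q_i)-1$. The ideal $J_s$ cuts out the finite set $Z_s$ of $p^s$-torsion characters, and $\dim_{\mathbb{Q}_p}V/J_sV=\sum_{\chi\in Z_s\cap W}\dim_{\mathbb{Q}_p}(V/J_sV)_\chi$. For $H\in\mathcal{B}$ the inclusion $\mathbb{Z}_p[[H]]\hookrightarrow\Lambda$ induces a projection $\pi_H$ whose kernel is the one-dimensional subtorus $S_H=\{\chi:\chi|_H=1\}=\widehat{Q/H}$, and finiteness of $A$ over $\mathbb{Z}_p[[H]]$ means precisely that $W$ is finite over the image of $\pi_H$; in particular $W$ contains no coset of $S_H$.

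Next I use the hypothesis over all $H\in\mathcal{B}$ simultaneously to show that $W$ contains no positive-dimensional torsion coset. Indeed, suppose $W\supseteq\eta+T$ with $T=\widehat{Q/H'}$ a subtorus of positive dimension and $\eta$ a torsion character. Choosing any primitive rank-one quotient $Q/H'\onto\mathbb{Z}_p$ produces a corank-one subgroup $H\supseteq H'$ with $S_H\subseteq T$, whence $W$ contains the one-dimensional torsion coset $\eta+S_H$; but $\pi_H(\eta+S_H)$ is a single point, so $\eta+S_H$ lies in one fibre of $\pi_H$, contradicting that $W$ is finite over $\mathbb{Z}_p[[H]]$. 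Hence no component of $W$ is a translate of a positive-dimensional subtorus by a torsion character.

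It remains to deduce that $W$ meets $\bigcup_s Z_s$ (all $p$-power torsion characters) in a finite set and that the local contributions $\dim_{\mathbb{Q}_p}(V/J_sV)_\chi$ stay bounded; since $Z_1\subseteq Z_2\subseteq\cdots$, this yields $\sup_s\dim_{\mathbb{Q}_p}V/J_sV<\infty$ and proves the proposition. This finiteness of torsion points on a subvariety containing no torsion coset is a Manin--Mumford/Ihara--Serre--Tate type phenomenon, and establishing it within the present $p$-adic setting is where I expect the real difficulty to lie. The plan is to prove it by induction on $n$: choosing $H_0\in\mathcal{B}$, I apply Cayley--Hamilton to the action of $q_n$ on the $\mathbb{Z}_p[[H_0]]$-module $A$ (generated by $\le d$ elements) to obtain a monic $F(X)=X^{d}+c_{d-1}X^{d-1}+\cdots+c_0$ with $c_i\in\mathbb{Z}_p[[H_0]]$ and $F(q_n)A=0$; specialising the first $n-1$ coordinates at a torsion character of $H_0$ exhibits the eigenvalues of $q_n$ on the corresponding fibre as the $\le d$ roots of $F$, and on each fibre one is reduced to the one-variable case, where $V/\omega_sV$ has dimension bounded by the $\lambda$-invariant for all $s$. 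The main obstacle is to control, uniformly in $s$, the set of fibres on which one of these roots becomes a $p^s$-th root of unity: this is exactly the quantitative form of the no-torsion-coset conclusion of the previous paragraph, and threading it through the induction while keeping the bound independent of $s$ is the technical heart of the argument.
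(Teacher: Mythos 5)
Your setup is sound as far as it goes: the identification of $\rk(A\otimes_{\mathbb{Z}_p[[Q^{p^s}]]}\mathbb{Z}_p)$ with $\dim_{\mathbb{Q}_p}V/J_sV$ for $V=A\otimes_{\mathbb{Z}_p}\mathbb{Q}_p$, the observation via Nakayama that the hypothesis makes $A$ finitely generated over $\mathbb{Z}_p[[H]]$ for every $H\in\mathcal{B}$, and the deduction that the support $W$ of $V$ contains no torsion coset of a positive-dimensional subtorus are all correct. But the proof stops exactly where the real work begins, and you say so yourself: the finiteness of $W\cap\bigcup_s Z_s$, together with a bound on the local contributions $\dim_{\mathbb{Q}_p}(V/J_sV)_\chi$ that is uniform in $s$, is asserted as a Manin--Mumford type phenomenon and then only sketched via an induction whose key uniformity you explicitly leave open. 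That is a genuine gap, not a routine verification: in this formal $p$-adic setting the implication ``no torsion coset $\Rightarrow$ finitely many $p$-power torsion points in the support'' is not a citable black box, and the qualitative fact that $W$ contains no torsion coset is in any case weaker than what must be extracted from the hypothesis --- one needs a single integer $r$ such that every relevant torsion character has order dividing $r$.

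The paper closes this gap by a more quantitative mechanism, and it is worth seeing why your hypothesis is used twice rather than once. For each $p$-power $m$ it takes the associated primes $P_{m,i}$ of $A/A\Omega_m$ (these play the role of your torsion points of level $m$, with multiplicity data attached). Lemma \ref{claim1} shows each $P_{m,i}$ contains $H_0-1$ for some $H_0\in\mathcal{B}$, and then Lemma \ref{claim2} shows that $R/P_{m,i}\cong\mathbb{Z}_p^{d}$ with $d\le\delta:=\sup_{H}\dim_{\mathbb{F}_p}(A\otimes_{\mathbb{Z}_p[[H]]}\mathbb{F}_p)$; this is the second, quantitative use of the hypothesis, obtained by embedding $R/P_{m,i}$ into a quotient of $A\otimes_{\mathbb{Z}_p[[H_0]]}\mathbb{Z}_p$. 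Consequently the image of $Q$ is a finite cyclic subgroup of ${\rm{GL}}_\delta(\mathbb{Z}_p)$, and Lazard's theorem bounds its order by a fixed $r$ independent of $m$. Every associated prime arising at any level therefore contains $(Q^r-1)$, an ideal with only finitely many primes above it: this is precisely the uniform-in-$s$ control your induction is missing. The multiplicities are then handled not pointwise but globally, by showing that $J_m\otimes_{\mathbb{Z}_p}\mathbb{Q}_p$ stabilizes for $m\gg 0$ (Lemma \ref{claim3}, via a gcd argument with $x^{mp}-1$ and $(x^m-1)^s$) and bounding $\dim_{\mathbb{Q}_p}(A/A\Omega_m)\otimes_{\mathbb{Z}_p}\mathbb{Q}_p$ by the number of generators of $A$ times $\dim_{\mathbb{Q}_p}(R/J_m)\otimes_{\mathbb{Z}_p}\mathbb{Q}_p$. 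To complete your route you would need to import both of these ingredients --- the bound $\delta$ on the residue rings at the torsion points of the support, and Lazard's bound on finite subgroups of ${\rm{GL}}_\delta(\mathbb{Z}_p)$ --- at which point your argument essentially becomes the paper's.
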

	 
To prove the following theorem, we will reduce to the
metabelian case and then apply Proposition B.

\begin{thmA} Let $G$ be a finitely generated pro-$p$ group that is nilpotent-by-(torsion-free abelian).
If $
\sup_{G/ H \cong \mathbb{Z}_p} d(H) < \infty
$, then 
$$\sup_{M \in {\mathcal A}} \rk H_1(M, \mathbb{Z}_p) < \infty,$$ where ${\mathcal A}$ is the set of all pro-$p$ subgroups of finite index in $G$. 
\end{thmA}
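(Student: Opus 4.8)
The plan is to argue by induction on the nilpotency class $c$ of a nilpotent normal subgroup $N\trianglelefteq G$ for which $Q:=G/N$ is torsion-free abelian. Since $G$ is finitely generated, $Q$ is a free abelian pro-$p$ group of finite rank $n$, so that $R:=\mathbb{Z}_p[[Q]]\cong\mathbb{Z}_p[[t_1,\dots,t_n]]$ is Noetherian and Proposition B is available. The whole argument is driven by the single $R$-module $A:=N^{\mathrm{ab}}=N/\overline{[N,N]}$; it is finitely generated over $R$ because $N$ is the closed normal subgroup generated by finitely many conjugacy classes (as $G$ is finitely generated over $Q$) and $R$ is Noetherian. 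For a finite-index pro-$p$ subgroup $M\le G$ write $Q_M:=MN/N\le Q$; I will bound $\rk H_1(M,\mathbb{Z}_p)$ uniformly by peeling off the bottom layer $Z:=\gamma_c(N)$, which is central in $N$ and normal in $G$, hence an $R$-module, and passing to $\overline G:=G/Z$. The group $\overline G$ is again finitely generated nilpotent-by-(torsion-free abelian) with the same $Q$, its nilpotent part $N/Z$ has class $c-1$, and its $d$-hypothesis is inherited because every $\overline H\trianglelefteq\overline G$ with $\overline G/\overline H\cong\mathbb{Z}_p$ lifts to $H\trianglelefteq G$ with $G/H\cong\mathbb{Z}_p$ and $d(\overline H)\le d(H)$. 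When $c=0$ the group $G=Q$ is abelian and $\rk H_1(M,\mathbb{Z}_p)=\rk M\le n$; the first nontrivial case $c=1$ is exactly the metabelian case, handled directly by the module input below.

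The module input is obtained by feeding $A$ into Proposition B, so I must first check its hypothesis, and this is where the standing assumption $C:=\sup_{G/H\cong\mathbb{Z}_p}d(H)<\infty$ enters. Fix $H\in\mathcal B$, so $H\trianglelefteq Q$ with $Q/H\cong\mathbb{Z}_p$ and $H\cong\mathbb{Z}_p^{\,n-1}$, and let $\widehat H\le G$ be the preimage of $H$, so that $N\trianglelefteq\widehat H$, $\widehat H/N\cong H$, and $G/\widehat H\cong\mathbb{Z}_p$ gives $d(\widehat H)\le C$. The five-term exact sequence in $\mathbb{F}_p$-homology of $1\to N\to\widehat H\to H\to 1$ is
\[ H_2(H,\mathbb{F}_p)\longrightarrow A\otimes_{\mathbb{Z}_p[[H]]}\mathbb{F}_p\longrightarrow H_1(\widehat H,\mathbb{F}_p)\longrightarrow H_1(H,\mathbb{F}_p)\longrightarrow 0, \]
since $H_1(N,\mathbb{F}_p)_H=A\otimes_{\mathbb{Z}_p[[H]]}\mathbb{F}_p$. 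As $\dim_{\mathbb{F}_p}H_2(H,\mathbb{F}_p)=\binom{n-1}{2}$ and $\dim_{\mathbb{F}_p}H_1(\widehat H,\mathbb{F}_p)=d(\widehat H)$, this yields $\dim_{\mathbb{F}_p}(A\otimes_{\mathbb{Z}_p[[H]]}\mathbb{F}_p)\le\binom{n-1}{2}+C$, uniformly in $H\in\mathcal B$. Proposition B now gives $\sup_{s\ge1}\rk(A\otimes_{\mathbb{Z}_p[[Q^{p^s}]]}\mathbb{Z}_p)<\infty$; and since every finite-index $Q'\le Q$ contains some $Q^{p^s}$ and $A\otimes_{\mathbb{Z}_p[[Q']]}\mathbb{Z}_p$ is a quotient of $A\otimes_{\mathbb{Z}_p[[Q^{p^s}]]}\mathbb{Z}_p$, we conclude $D:=\sup_{Q'}\rk(A\otimes_{\mathbb{Z}_p[[Q']]}\mathbb{Z}_p)<\infty$, the supremum running over all finite-index $Q'\le Q$.

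For the inductive step I combine this with the extension $1\to Z_M\to M\to\overline M\to1$, where $Z_M:=M\cap Z$ and $\overline M:=MZ/Z\le\overline G$. The five-term sequence gives $\rk H_1(M,\mathbb{Z}_p)\le\rk\big(Z_M\otimes_{\mathbb{Z}_p[[\overline M]]}\mathbb{Z}_p\big)+\rk H_1(\overline M,\mathbb{Z}_p)$, and the second summand is bounded by induction. In the first summand the $\overline M$-action on $Z_M$ factors through $Q_M$ (as $N$ centralizes $Z$); moreover $Z/Z_M$ is finite, so the long exact homology sequence of $0\to Z_M\to Z\to Z/Z_M\to0$ over $\mathbb{Z}_p[[Q_M]]$ shows $\rk(Z_M\otimes_{\mathbb{Z}_p[[Q_M]]}\mathbb{Z}_p)=\rk(Z\otimes_{\mathbb{Z}_p[[Q_M]]}\mathbb{Z}_p)$, since $H_1(Q_M,Z/Z_M)$ and $(Z/Z_M)_{Q_M}$ are finite. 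Finally the iterated-commutator map induces a $Q$-equivariant surjection of $R$-modules $A^{\otimes_{\mathbb{Z}_p}c}\twoheadrightarrow\gamma_c(N)=Z$ (diagonal $Q$-action on the tensor power, as conjugation is an automorphism), and from $\prod_{i}q_i-1\in\sum_i A^{\otimes (i-1)}\otimes I_{Q_M}A\otimes A^{\otimes (c-i)}$ one gets that $A^{\otimes c}\otimes_{\mathbb{Z}_p[[Q_M]]}\mathbb{Z}_p$ is a quotient of $\big(A\otimes_{\mathbb{Z}_p[[Q_M]]}\mathbb{Z}_p\big)^{\otimes c}$. Hence $\rk(Z\otimes_{\mathbb{Z}_p[[Q_M]]}\mathbb{Z}_p)\le\big(\rk(A\otimes_{\mathbb{Z}_p[[Q_M]]}\mathbb{Z}_p)\big)^c\le D^c$, which closes the induction.

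The step I expect to be the main obstacle is precisely the treatment of the deeper layers. The naive idea—apply Proposition B to $Z=\gamma_c(N)$ directly—fails, because for finitely generated nilpotent-by-abelian $G$ the lower-central factors $\gamma_i(N)/\gamma_{i+1}(N)$ need not be finitely generated as $\mathbb{Z}_p[[Q]]$-modules (already $\gamma_2/\gamma_3$ can fail to be). The device that circumvents this is to apply Proposition B only to the finitely generated module $A=N^{\mathrm{ab}}$ and to transport the resulting bound to every layer through the surjection $A^{\otimes c}\twoheadrightarrow Z$ together with the comultiplication estimate of the previous paragraph, which never requires finite generation of $Z$. The other delicate point is the verification of Proposition B's hypothesis, where the boundedness of $d(H)$ is converted into boundedness of $\dim_{\mathbb{F}_p}(A\otimes_{\mathbb{Z}_p[[H]]}\mathbb{F}_p)$ via the five-term sequence; here it is essential that the correction term $H_2(H,\mathbb{F}_p)$ has dimension depending only on $n$, not on $H$.
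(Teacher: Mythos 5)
Your verification of the hypothesis of Proposition B (via the five-term sequence with the uniform correction term $\binom{n-1}{2}$) and your handling of the bottom layer are essentially the paper's argument. The inductive step on the nilpotency class, however, contains a genuine error: the natural map between coinvariants of a tensor power and the tensor power of coinvariants goes in the opposite direction to the one you need. The telescoping identity you invoke shows that the diagonal augmentation ideal carries $A^{\widehat\otimes c}$ into $\sum_i A^{\widehat\otimes (i-1)}\widehat\otimes (AI_{Q_M})\widehat\otimes A^{\widehat\otimes (c-i)}$, and hence produces a surjection $A^{\widehat\otimes c}\otimes_{\Z_p[[Q_M]]}\Z_p \onto \bigl(A\otimes_{\Z_p[[Q_M]]}\Z_p\bigr)^{\widehat\otimes c}$; that is, the tensor power of the coinvariants is a quotient of the coinvariants of the tensor power, not the other way around. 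So the inequality $\rk (Z\otimes_{\Z_p[[Q_M]]}\Z_p)\le D^c$ does not follow, and it is false in general: take $c=2$, $Q_M=\langle q\rangle\cong\Z_p$ and $A=\Z_p[[Q_M]]=\Z_p[[t]]$ with $q=1+t$. Then $A\otimes_{\Z_p[[Q_M]]}\Z_p\cong\Z_p$ has rank $1$, whereas $A\widehat\otimes_{\Z_p}A\cong\Z_p[[t_1,t_2]]$ with $q$ acting diagonally as multiplication by $(1+t_1)(1+t_2)$ has coinvariants $\Z_p[[t_1,t_2]]/(t_1+t_2+t_1t_2)\cong\Z_p[[t_1]]$, of infinite rank. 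This is exactly the mechanism by which $\gamma_2(N)$ fails to be finitely generated over $\Z_p[[Q]]$ (a difficulty you correctly identify), so the quantity $\rk\bigl(Z_M\otimes_{\Z_p[[\overline M]]}\Z_p\bigr)$ that your five-term bound isolates need not even be finite, and your device for bounding it does not work.

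The paper avoids this entirely: it never bounds the rank of the coinvariants of the deeper layers, because their whole contribution to rational $H_1$ is zero. In Proposition \ref{p:nilpotentpro-p} the image $W_{n,i}$ of $G_n\cap\gamma_i(N)$ in $H_1(G_n,\Z_p)$ is filtered with successive quotients $W_{n,i}/W_{n,i+1}$ that are images of $V_{n,i}/\underline{\gamma_i(N\cap G_n)}$; since $\widehat\otimes^i V/\underline{\widehat\otimes^i V_n}$ has finite exponent when $V_n$ has finite index in $V$, and since $\gamma_i(N\cap G_n)\subseteq \overline{[G_n,G_n]}$ already dies in $H_1(G_n,\Z_p)$, every such quotient is annihilated by $\otimes_{\Z_p}\Q_p$ for $i\ge 2$. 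Hence $H_1(G_n,\Z_p)\otimes\Q_p\cong H_1(\underline{G}_n,\Z_p)\otimes\Q_p$ with $\underline{G}_n\le G/N'$, and one reduces in a single step to the metabelian case rather than inducting on the class. The key point your argument misses is that the image of $Z_M$ in $H_1(M,\Z_p)$ is a much smaller quotient of $(Z_M)_{\overline M}$ than the full module of coinvariants --- small enough to be torsion --- and it is this, not any rank bound on $(Z_M)_{\overline M}$, that makes the reduction go through.
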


By combining Theorem C with Proposition A and using the fact that a subgroup of finite index in a finitely presented group is finitely presented, we obtain the result stated in the abstract.

\begin{corA} For every finitely presented pro-$p$ nilpotent-by-abelian-by-finite group $G$,  there is an upper bound on $\rk H_1(M, \mathbb{Z}_p)$, as $M$ runs through all pro-$p$ subgroups of finite index in $G$.
\end{corA}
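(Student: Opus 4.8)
The plan is to deduce Corollary D from Theorem C and Proposition A by a reduction to finite index followed by a transfer argument; all of the genuine content lies in those two results, which I assume, so what remains is to assemble them correctly.

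Since $G$ is nilpotent-by-abelian-by-finite, it contains a pro-$p$ subgroup $K$ of finite index that is nilpotent-by-abelian, say with $N \ns K$, $N$ nilpotent and $K/N$ abelian. As $G$ is finitely generated, so is $K$, and hence $K/N$ is a finitely generated abelian pro-$p$ group; thus $K/N \cong \mathbb{Z}_p^r \times T$ with $T$ the finite torsion subgroup. Let $G_0$ be the preimage in $K$ of the direct factor $\mathbb{Z}_p^r$ under the projection $K \onto K/N$. Then $N \le G_0$, the quotient $G_0/N \cong \mathbb{Z}_p^r$ is torsion-free, and $[K:G_0]=\abs{T}<\infty$, so $G_0$ has finite index in $G$ and is nilpotent-by-(torsion-free abelian). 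Because $G_0$ has finite index in the finitely presented group $G$, it is itself finitely presented, hence finitely generated, and it is soluble.

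I would then apply Proposition A to $G_0$, obtaining $\sup_{G_0/H \cong \mathbb{Z}_p} d(H) < \infty$, which is exactly the hypothesis needed by Theorem C. Applying Theorem C with $G_0$ in the role of $G$ furnishes a constant $C$ such that $\rk H_1(N', \mathbb{Z}_p) \le C$ for every pro-$p$ subgroup $N'$ of finite index in $G_0$.

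It remains to pass from finite-index subgroups of $G_0$ to arbitrary finite-index subgroups of $G$. Given a pro-$p$ subgroup $M \le G$ of finite index, set $N' = M \cap G_0$, so that $N'$ has finite index in $G_0$ and $[M:N'] \le [G:G_0]$ is a power of $p$. The corestriction and transfer maps between $H_1(N', \mathbb{Z}_p)$ and $H_1(M, \mathbb{Z}_p)$ have composite equal to multiplication by $[M:N']$; after applying the exact functor $-\otimes_{\mathbb{Z}_p}\mathbb{Q}_p$ this composite becomes an isomorphism, since $[M:N']$ is a power of $p$ and $p$ is invertible in $\mathbb{Q}_p$. Hence corestriction induces a surjection $H_1(N', \mathbb{Z}_p)\otimes_{\mathbb{Z}_p}\mathbb{Q}_p \onto H_1(M, \mathbb{Z}_p)\otimes_{\mathbb{Z}_p}\mathbb{Q}_p$, so that $\rk H_1(M, \mathbb{Z}_p) \le \rk H_1(N', \mathbb{Z}_p) \le C$. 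As $C$ does not depend on $M$, this is the uniform bound asserted by the corollary.

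The step that most needs care is this last transfer argument: it succeeds only because the indices in question are powers of $p$ and $p$ is invertible in $\mathbb{Q}_p$, so that the $\mathbb{Q}_p$-homology of a finite-index subgroup surjects onto that of the overgroup. The analogous statement with $\mathbb{F}_p$- or $\mathbb{Z}_p$-coefficients would fail, which is precisely why Theorem C is phrased in terms of torsion-free rank; everything else in the deduction is organisational.
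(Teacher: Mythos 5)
Your argument is correct and follows essentially the same route as the paper: the paper's proof of Corollary~D is precisely the one-line combination of Proposition~A and Theorem~C, applied to a finite-index nilpotent-by-(torsion-free abelian) subgroup $G_0$, which is finitely presented (hence within the scope of Proposition~A) because it has finite index in the finitely presented group $G$. The one step the paper leaves implicit --- passing from finite-index subgroups of $G_0$ to arbitrary finite-index subgroups $M$ of $G$ --- you handle correctly via the transfer/corestriction composite; the only (harmless) inaccuracy is your closing remark that this works ``only because the indices are powers of $p$'': every nonzero integer is invertible in $\mathbb{Q}_p$, so the surjection $H_1(M\cap G_0,\mathbb{Z}_p)\otimes_{\mathbb{Z}_p}\mathbb{Q}_p \twoheadrightarrow H_1(M,\mathbb{Z}_p)\otimes_{\mathbb{Z}_p}\mathbb{Q}_p$ would hold for any finite index, while the genuine failure over $\mathbb{F}_p$ is illustrated by the examples in Section~\ref{s:examples}.
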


We separated Theorem C from Corollary D because, as we shall explain in Section \ref{s:examples},
there are interesting examples of metabelian pro-$p$ groups, arising in the work of
Jeremy King \cite{King2}, that are not finitely presented 
but still satisfy the conclusion of Proposition A. 
In Section \ref{s:examples} we shall also describe examples 
which show that Theorem C can fail when one changes the field
of coefficients from $\Q_p$ to $\mathbb{F}_p$.

The analogue of Corollary D in the setting of discrete groups is this: if
$G$ is  finitely presented  and nilpotent-by-abelian-by-finite, then there is an upper bound on $\dim_{\mathbb{Q}} H_1(M, \mathbb{Q})$, where $M$ runs through all subgroups of finite index in $G$. This was our main result in  \cite{BK}.
It was proved independently by Andrei  Jaikin Zapirain \cite{Jaikin} in the metabelian case under the weaker hypothesis
$\dim_{\mathbb Q} H_0(Q, H_2(N, {\mathbb Q})) < \infty$.

\medskip
{\bf Acknowledgements:} We thank Andrei Jaikin Zapirain for a suggestion that allowed us to remove
an unnecessary hypothesis from an earlier version of Theorem C and 
we thank the referee for the helpful comments that improved the paper.
 The work of the first author was supported by grants from the EPSRC
and by a Wolfson Merit Award from the Royal Society; the work of  the 
second author was supported by ``bolsa de produtividade em pesquisa", CNPq, Brazil: we thank all of these organizations.

\section{Preliminaries}

\subsection{Generating sets}

For a pro-$p$ group $G$ with a subset $X$ denote by $\langle X \rangle$ the pro-$p$ subgroup generated by $X$ i.e. the smallest pro-$p$ subgroup of $G$ that contains $X$. If 
$G = \langle X \rangle $ we say that $X$ is a generating set for $G$.

\subsection{Completed tensor products}

Let $R$ be a profinite ring, $V_1$ a profinite right $R$-module and $V_2$ a profinite left $R$-module. The profinite tensor product $V_1 \widehat{\otimes}_R V_2$ can be defined either 
by a universal property or as an inverse limit of  finite abstract tensor products \cite{RZbook}. In the case when
at least one of $V_1$ and $V_2$ is finitely presented as a profinite $R$-module, there is a natural isomorphism $V_1 \widehat{\otimes}_R V_2 \cong V_1 \otimes_R V_2$ (see \cite[Cor.~2.4]{student}). Thus, whenever possible (e.g. 
Proposition B and Theorem C)
 we use the notation $\otimes$, reserving $\widehat{\otimes}$ to emphasize the need
 for completed tensor products when we do not know whether either
 of the profinite modules is finitely presented. In this paper, 
 the only profinite modules we consider are pro-$p$ modules over pro-$p$ rings.

\subsection{Noetherian completed group algebras} \label{noetheriansubsection}

Let $Q$ be a finitely generated pro-$p$ abelian group.  $Q \cong Q_0  \oplus K$, where $K$ is a finite abelian $p$-group and $Q_0\cong \mathbb{Z}_p^n$ is free-abelian pro-$p$.
 Note that 
 $\mathbb{Z}_p[[Q_0]] \cong \mathbb{Z}_p[[t_1, \ldots , t_n]]$ is  the power series ring in $n$ commuting variables, where under the identification,  $Q_0$ is generated by $\{1 + t_1, \ldots , 1 + t_n \}$. In
 particular, $\mathbb{Z}_p[[Q_0]]$ is abstractly Noetherian. Moroever,
 since $\mathbb{Z}_p[[Q]]$ is finitely generated as an abstract $\mathbb{Z}_p[[Q_0]]$-module, 
 it too is abstractly Noetherian.

According to \cite[Lemma~7.2.2]{Wbook}, for every pro-$p$ group $G$, every pro-$p$ $\mathbb{Z}_p[[G]]$-module $V$ that is topologically generated by a finite set $X$ is generated as an abstract 
 $\mathbb{Z}_p[[G]]$-module by the same set $X$. In this sense,
 the topology of $V$ is hidden in the topology of the ring $\mathbb{Z}_p[[G]]$ that acts on $V$.

Thus we see that for a finitely generated abelian pro-$p$ group $Q$, the ring
$\mathbb{Z}_p[[Q]]$ is Noetherian in both the abstract and pro-$p$ sense. In fact,
 every abstract ideal (topology ignored)  of $\mathbb{Z}_p[[Q]]$ is finitely generated (as an abstract $\mathbb{Z}_p[[Q]]$-submodule), moreover it is closed (\cite[Lemma 7.2.2]{Wbook}), and therefore is a finitely generated pro-$p$ $\mathbb{Z}_p[[Q]]$-submodule of $\mathbb{Z}_p[[Q]]$. 
Thus when we work with ideals in $\mathbb{Z}_p[[Q]]$, we do not need to specify  that they are closed, because this condition is automatic.
This freedom to ignore the topology allows us to
 apply the methods of commutative algebra for Noetherian rings, exploiting in particular
 the fact that above every proper ideal of $\mathbb{Z}_p[[Q]]$ there are only finitely many minimal prime ideals.

\subsection{Primary decomposition in Noetherian rings}

We recall some basic ideas from commutative algebra; see \cite[Chapter~3]{eisenbud} or \cite{algebrabook}. 
Let $R$ be a commutative Noetherian ring with unit and $M$ a finitely generated $R$-module. An 
{\em{associate prime}} of $M$ is a prime ideal $P$ of $R$ such that $R/P$ arises as an $R$-submodule of $M$
(i.e. $P=\ann_R(m)$ for some $m\in M$).  The set of associated primes of $M$ is finite and is denote by $\Ass(M)$. All  of the minimal primes among the prime ideals containing $\ann_R(M)$ are associated primes of $M$.
An $R$-module $W$ is call {\em $P$-primary} if $\Ass(W) = \{ P \}$. In this case $P$ acts nilpotently on $W$.

For any $R$-submodule $V$ of $M$ there is a primary decomposition $V = V_1 \cap V_2 \cap \ldots \cap V_s$, where for each $V_i$ we have that $M / V_i$ is $P_i$-primary and the prime ideals $P_i$ are pairwise distinct 
(although one can be included in another). The prime ideals $P_1, \ldots, P_s$ are the associated primes of $M/V$.
 The component $V_i$ is unique if $P_i$ is minimal (with respect to inclusion)
 among the associated primes $\Ass(M/V)$.

We will apply the primary decomposition for $R = \mathbb{Z}_p[[Q]]$, $Q = \mathbb{Z}_p^n$ and $M$ a finitely generated pro-$p$ $R$-module. As explained in the previous subsection, we are free to ignore the topological structure of $M$ and view it as an abstract $R$-module. All abstract $R$-submodules of $M$ are finitely generated as abstract $R$-submodules and are automatically pro-$p$ $R$-submodules.

\section{Proof of Proposition A}
We have $G = H \rtimes H_0$, where $H_0 = \langle q \rangle \cong
\mathbb{Z}_p$. As we noted in the introduction, Wilson \cite[Corollary A, (ii)]{JSWilson} proved that $H$ is
finitely generated. 
On other hand, using that $H/ H' H^p$ is a finite $\mathbb{F}_p[[H_0]]$-module and $\mathbb{F}_p[[H_0]]$ is a principal ideal domain, we get a decomposition
$$
H/ H' H^p \cong (\mathbb{F}_p[[H_0]]/I_1) \oplus \ldots
\oplus  (\mathbb{F}_p[[H_0]]/I_s),
$$
where $H'$ is the pro-$p$ subgroup of $H$ generated by commutators, $H^p$ is the pro-$p$ subgroup of $H$ generated by all the set $\{ h^p \mid h \in H \}$, every $\mathbb{F}_p[[H_0]]/ I_j$ is finite, and the isomorphism is of $\mathbb{F}_p[[H_0]]$-modules. There is an isomorphism of modules 
$$
 \mathbb{F}_p[[H_0]]/I_j \cong \mathbb{F}_p[[t]]/
 (t^{i_j}),
$$
with $q$ acting as multiplication by $t+1$. (Here we have used the fact that
every ideal of $\mathbb{F}_p[[H_0]]$ is of the form $(t^i)$.) 
Let
$$
m = \max \{i_1, \ldots, i_s \} \geq 1.
$$
By \cite[Cor.~B]{JSWilson}, there is a constant $k$
such that for every pro-$p$ subgroup $U$ of finite index in $G$  we have
\begin{equation} \label{wilson11} d(U) \leq k [G : U]^{1/2}.
\end{equation}
We apply this to $$U = \langle H, q^{p^j} \rangle,
\hbox { where } 
p^j > m \geq p^{j-1} \geq 1,
$$
noting that since $U$ is a split extension  of $H$ by the pro-$p$ subgroup generated by $q^{p^j}$,
\begin{equation} \label{sexta}
U / U' U^p \cong (H / H' H^p \otimes_{\mathbb{F}_p[[\langle
  q^{p^j}  \rangle]]} \mathbb{F}_p) \oplus \mathbb{F}_p,
\end{equation}
where the second summand $\mathbb{F}_p$ is generated by the image of $q^{p^j}$.
The action of $q^{p^j}$ on $\mathbb{F}_p[[t]]/
 (t^{i_j})$ is given by multiplication with the image of  $(1 + t)^{p^j}$ which, since we are in characteristic $p$,
 is $ 1 + t^{p^j}$. Since $p^j \geq m \geq i_j$ we see that $(1 +
 t)^{p^j}$ acts as the identity. Thus
$$(H / H' H^p) \otimes_{\mathbb{F}_p[[\langle
  q^{p^j} \rangle]]} \mathbb{F}_p \cong H/ H' H^p,
$$
and by (\ref{sexta}) we have
$$
d(U) = d((H / H' H^p) \otimes_{\mathbb{F}_p[[\langle
  q^{p^j} \rangle]]} \mathbb{F}_p) + 1 = \dim (H / H' H^p) + 1 = i_1 + \ldots +
i_s + 1. 
$$
Combining this with (\ref{wilson11}) we get
$$
i_1 + \ldots +
i_s + 1  \leq k p^{j/2},
$$
thus
$$
(i_1 + \ldots +
i_s)^2 < k^2 p^j = k^2 p p^{j-1} \leq k^2 p m \leq k^2 p (i_1 + \ldots +
i_s),
$$
and 
$$
i_1 + \ldots +
i_s < k^2 p.
$$
Therefore,
$$d(H) = i_1 + \ldots +
i_s  < k^2 p.
$$
\qed

\section{Proof of Proposition B}

Throughout this section we write $R$ for the completed group algebra $\mathbb{Z}_p[[Q]]$. This is a
local ring whose unique maximal ideal is the kernel $P$ of the natural morphism
$\mathbb{Z}_p[[Q]]\twoheadrightarrow\F_p$. 

We are given a free abelian pro-$p$ group of finite rank $Q$ and 
a finitely generated $R$-module  $A$, which we know to be Noetherian. We assume
that as $H<Q$ ranges over the normal pro-$p$ subgroups with $Q/H \cong \mathbb{Z}_p$,  $\dim_{\mathbb{F}_p} (A {\otimes}_{\mathbb{Z}_p[[H]]} \mathbb{F}_p)$ remains bounded, and from this we must deduce that $ \rk( A {\otimes}_{\mathbb{Z}_p
  [[Q^{p^s}]]} \mathbb{Z}_p )$ remains bounded as the positive integer $s$ increases. Roughly speaking,
  the idea of the proof is to reduce to the case of a cyclic module (with prime annihilator),
  and we do this by examining the primary decomposition. 
  The key gain in finiteness comes in Lemma \ref{claim2}, from Noetherian
  properties of $R$ and $A$, and from Lazard's bound \cite{lazard} on the order of finite subgroups in
   ${\rm{GL}}_d(\Z_p)$. 
  
  \smallskip
  Consider the $R$-submodule  $A_m = A \Omega_m$ in $A$, where $m$ is a power of $p$
  and $\Omega_m$ is the augmentation ideal $\ker (\mathbb{Z}_p[[ Q^{m}]]\to\Z_p)$. Let
$$
A_m = N_{m,1} \cap N_{m,2} \cap \ldots \cap N_{m,j}
$$
be a primary decomposition of $A_m$ in $A$. Then
$$\Ass(A/ N_{m,i}) = \{ P_{m,i} \},
$$
where $P_{m,i}$ is a prime ideal in $R$,
and 
$$
\Ass(A / A_m) = \{ P_{m,1}, \ldots , P_{m,j} \}.
$$
Note that, by the definition of an associate prime ideal,  $R / P_{m,i}$ embeds in $A / A_m$, hence $P_{m,i}$ contains $J_m = \ann_R(A / A_m)$. 
Thus
\begin{equation} \label{inclusion1}
Q^{m} - 1 \subset J_m = \ann_R(A/ A_m) \subseteq P_{m,i}.
\end{equation} 
Now, $\mathbb{Z}_p[[Q/ Q^{m}]] = \mathbb{Z}_p[Q/ Q^{m}]$ has Krull dimension 1, so either $P_{m,i}$ is the unique maximal ideal $P \ns R$ or else $P_{m,i}$ is a minimal among the prime ideals containing $J_m$ and there is no prime ideal $P_0$  lying strictly between $P_{m,i}$ and $P$. Furthermore, by \cite[Thm.~3.1]{eisenbud},  every ideal in $R$ 
that is minimal among the prime ideals containing  $J_{m}$ is an associated prime ideal  of $A / A_m$.

The following lemma is used to limit the possibilities for the domain $R/P_{m,i}$.
Recall that ${\mathcal B}$ is the set of pro-$p$ subgroups
$H \ns Q$ with $Q / H \cong \mathbb{Z}_p$.  

\begin{lemma}\label{claim1} For each  $P_{m,i}$ there exists $H_0 \in {\mathcal B}$ such that 
$H_0 - 1 \subseteq P_{m,i}.$
\end{lemma}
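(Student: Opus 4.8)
The plan is to translate the conclusion into a statement about a character of $Q$ and then exploit the one-dimensionality of the domain $R/P_{m,i}$. Fix $P_{m,i}$, write $\bar R = R/P_{m,i}$, and consider the homomorphism $\chi \colon Q \to \bar R^{\times}$ sending a group element $h \in Q \subseteq R^{\times}$ to its image $\bar h$. Its kernel is $K = \{ h \in Q : h-1 \in P_{m,i}\}$, and the sought conclusion $H_0 - 1 \subseteq P_{m,i}$ is exactly the assertion $H_0 \subseteq K$. So the lemma reduces to producing some $H_0 \in {\mathcal B}$ inside $K$. First I would note that $K$ has finite index: by (\ref{inclusion1}) we have $Q^m - 1 \subseteq P_{m,i}$, hence $Q^m \subseteq K$, so $Q/K$ is a finite abelian $p$-group (a quotient of the finite group $Q/Q^m$), and in particular $K$ is closed of finite index.

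The crucial step is to show that $Q/K$ is in fact \emph{cyclic}. Here I would use the dichotomy recorded just before the lemma together with the fact that $R/J_m$, being a quotient of $\Z_p[Q/Q^m]$, is finitely generated as a $\Z_p$-module; hence so is its further quotient $\bar R$. If $p \in P_{m,i}$, then $\bar R$ is a finite-dimensional domain over $\F_p$, i.e. a finite field, and $\bar R^{\times}$ has order prime to $p$, so the image $\chi(Q)$ of the $p$-group $Q$ is trivial. If $p \notin P_{m,i}$, then $\Z_p$ embeds in the domain $\bar R$, which is therefore an order in the finite extension $L = \mathrm{Frac}(\bar R)$ of $\Q_p$; the $p$-power torsion of $\bar R^{\times} \subseteq L^{\times}$ lies in $\mu_{p^{\infty}}(L)$, which is cyclic. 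Either way the finite abelian $p$-group $Q/K \cong \chi(Q) \subseteq \bar R^{\times}$ is cyclic.

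Finally, with $Q/K$ cyclic, I would extract $H_0$ by elementary structure theory of $Q \cong \Z_p^n$ over the PID $\Z_p$. Smith normal form supplies a basis $f_1, \dots, f_n$ of $Q$ and exponents $0 \le a_1 \le \cdots \le a_n$ with $K = \langle p^{a_1} f_1, \dots, p^{a_n} f_n \rangle$, so that $Q/K \cong \bigoplus_i \Z/p^{a_i}$; cyclicity forces $a_1 = \cdots = a_{n-1} = 0$, whence $f_1, \dots, f_{n-1} \in K$. Then $H_0 = \langle f_1, \dots, f_{n-1} \rangle$ is a corank-one direct summand with $Q/H_0 \cong \Z_p$, so $H_0 \in {\mathcal B}$ and $H_0 \subseteq K$, which is the required $H_0 - 1 \subseteq P_{m,i}$. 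The main obstacle is precisely the cyclicity of $Q/K$: if $Q/K$ were merely finite (say $\cong (\Z/p)^2$) then no pure corank-one subgroup need lie in $K$, so the entire argument hinges on the domain structure and Krull dimension of $R/P_{m,i}$ forcing the image of $Q$ in $\bar R^{\times}$ to be cyclic.
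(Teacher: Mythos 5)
Your proof is correct and follows essentially the same route as the paper's: both reduce the lemma to showing that the image of $Q$ in the domain $R/P_{m,i}$ is finite cyclic, split into the characteristic-$p$ case (where the image must be trivial) and the characteristic-$0$ case (where a finite subgroup of the multiplicative group of a field is cyclic), and then pass to a corank-one subgroup $H_0$. The only cosmetic differences are that the paper verifies cyclicity in characteristic $0$ via the splitting field of $x^s-1$ over $\Q$ rather than via $\mu_{p^\infty}$ of a finite extension of $\Q_p$, and that you make explicit the Smith-normal-form extraction of $H_0$, which the paper leaves implicit.
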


\begin{proof}
 Let $F_{m,i}$ be the abstract field of fractions of the (abstract)
domain $R/ P_{m,i}$. The lemma
 is equivalent to the assertion that the image of $Q$ in $F_{m,i}$ is a cyclic group. Note that we already know that the image of $Q$ in $F_{m,i}$ is finite.

We consider two cases. First suppose that $F_{m,i}$ has
characteristic 0. 
Then  the
subfield in $F_{m,i}$ generated by $\mathbb{Q}$ and the image of $Q$
is inside the spliting field $K_{m,i}$ (over $\mathbb{Q}$) of the polynomial
$x^s - 1$, where $s$ is the exponent of the image of $Q$ in
$F_{m,i}$. By the uniqueness of the splitting field (up to isomophism),
$K_{m,i}$ is isomorphic to the subfield of $\mathbb{C}$
generated by $\mathbb{Q}$ and a primitive $s$-th root of
unity. In particular the image of $Q$ in
$F_{m,i}$ is cyclic, as claimed.

Suppose now  that $F_{m,i}$ has positive characteristic. Since the elements of $\mathbb{Z}_p \smallsetminus p \mathbb{Z}_p$ are invertible in $\mathbb{Z}_p$,
the characteristic of $F_{m,i}$ is $p$.
By (\ref{inclusion1}) $F_{m,i}$  is a finite field, hence $F_{m,i}^*$ is a cyclic group of order coprime to $p$.
It follows that the image of $Q$ in $F_{m,i}$ is trivial and $F_{m,i} = \mathbb{F}_p$, so $P_{m,i} = P$ and the lemma holds.
\end{proof}

\begin{lemma}\label{claim2}
 Let $X_m = \{ P_{m,1}, \ldots , P_{m,j} \}$  and define $X = \cup_{m \geq 1} X_m$. Then $X$ is finite.
 \end{lemma}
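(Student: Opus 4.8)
The plan is to treat the two residue characteristics separately, to use Lemma \ref{claim1} to pin each prime against a single ``line'' $H_0$, and then to extract from the standing hypothesis a uniform bound on the order of the image of $Q$. First, the primes of characteristic $p$ cause no trouble: the argument in the positive-characteristic case of Lemma \ref{claim1} in fact shows that whenever $F_{m,i}$ has characteristic $p$ one has $P_{m,i}=P$. Thus $X$ contains at most one prime of characteristic $p$, namely the maximal ideal $P$, and it remains to bound the set of characteristic-$0$ primes in $X$.

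Fix such a prime $\mathfrak p=P_{m,i}$ with $p\notin\mathfrak p$. By (\ref{inclusion1}) we have $\Omega_m\subseteq\mathfrak p$ and $\ann_R(A)\subseteq J_m\subseteq\mathfrak p$, so the image of $Q$ in $R/\mathfrak p$ has finite $p$-power exponent, and by Lemma \ref{claim1} it is cyclic, say of order $p^{e}$. Let $H_0\in\mathcal B$ be the subgroup produced by Lemma \ref{claim1}, so that $I_{H_0}:=(H_0-1)R\subseteq\mathfrak p$ and a topological generator $\bar q$ of $Q/H_0\cong\mathbb Z_p$ maps to a primitive $p^{e}$-th root of unity $\zeta$ in the characteristic-$0$ domain $R/\mathfrak p$. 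The key point is to bound $e$ independently of $m$ and $\mathfrak p$, and here the hypothesis enters. By Nakayama's lemma $\dim_{\mathbb F_p}(A\otimes_{\mathbb Z_p[[H_0]]}\mathbb F_p)$ is the minimal number of generators of $A$ as a $\mathbb Z_p[[H_0]]$-module, so by hypothesis $A$, and hence the coinvariant module $A_{H_0}:=A/A(H_0-1)=A\otimes_R(R/I_{H_0})$, is generated by at most $C:=\sup_{H\in\mathcal B}\dim_{\mathbb F_p}(A\otimes_{\mathbb Z_p[[H]]}\mathbb F_p)$ elements over $\mathbb Z_p$; in particular the $\mathbb Z_p$-rank of $A_{H_0}$ is at most $C$. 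Writing $\overline\Lambda:=R/I_{H_0}\cong\mathbb Z_p[[Q/H_0]]$ and $\overline{\mathfrak p}:=\mathfrak p/I_{H_0}$, one has $A\otimes_R\kappa(\mathfrak p)=A_{H_0}\otimes_{\overline\Lambda}\kappa(\overline{\mathfrak p})$, and this fibre is nonzero because $\mathfrak p\in\mathrm{Supp}(A)=V(\ann_R A)$. Hence $\overline{\mathfrak p}\in\mathrm{Supp}_{\overline\Lambda}(A_{H_0})$, which says precisely that $\zeta$ is a root of the characteristic polynomial of $\bar q$ acting on the rank-$\le C$ module $A_{H_0}$. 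Thus $\zeta$ satisfies a monic polynomial of degree at most $C$ over $\mathbb Z_p$, so $[\mathbb Q_p(\zeta):\mathbb Q_p]\le C$; equivalently, the finite cyclic group $\langle\zeta\rangle$ embeds in $\mathrm{GL}_{\le C}(\mathbb Z_p)$ and Lazard's bound \cite{lazard} caps its order. Since a primitive $p^{e}$-th root of unity generates an extension of degree $p^{e-1}(p-1)$ over $\mathbb Q_p$ (for $e\ge 1$), we obtain $e\le e_0$ for a constant $e_0=e_0(C,p)$.

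With this uniform bound the finiteness of $X$ follows quickly. For every characteristic-$0$ prime $\mathfrak p\in X$ the image of $Q$ in $R/\mathfrak p$ has order dividing $p^{e_0}$, so $\Omega_{p^{e_0}}=(Q^{p^{e_0}}-1)R\subseteq\mathfrak p$; and $P\supseteq\Omega_{p^{e_0}}$ as well, so every prime in $X$ contains the single fixed ideal $J:=\ann_R(A)+\Omega_{p^{e_0}}$. But $R/J$ is a quotient of $R/\Omega_{p^{e_0}}\cong\mathbb Z_p[Q/Q^{p^{e_0}}]$, a free $\mathbb Z_p$-module of finite rank, and such an algebra has only finitely many prime ideals: its spectrum has finite fibres over $\{(0),(p)\}\subseteq\mathrm{Spec}\,\mathbb Z_p$, because $(R/J)\otimes\mathbb Q_p$ and $(R/J)/p(R/J)$ are Artinian. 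Hence only finitely many primes contain $J$, so $X$ is finite.

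The main obstacle is the middle step: all of the real content lies in bounding $e$ uniformly in $m$. Support and associated-prime bookkeeping alone are insufficient, since the condition $\mathfrak p\supseteq\ann_R(A)$ by itself permits infinitely many cyclotomic points; it is essential to combine Lemma \ref{claim1}, which trades the possibly high-dimensional module $A$ for the finite-rank module $A_{H_0}$ attached to a single line $H_0$, with the generation hypothesis and the Lazard-type bound on $p$-power roots of unity in a bounded-degree extension of $\mathbb Q_p$. Once $e$ is bounded, the reduction to a module-finite $\mathbb Z_p$-algebra with finite spectrum is routine.
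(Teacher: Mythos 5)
Your proof is correct, and its overall skeleton matches the paper's: dispose of the characteristic-$p$ primes (only $P$), use Lemma \ref{claim1} to make the image of $Q$ in $R/P_{m,i}$ cyclic, convert the hypothesis into a uniform bound on that cyclic group via a Lazard/cyclotomic estimate, and finish by observing that $\mathbb{Z}_p[Q/Q^{r}]$ has finite spectrum. Where you genuinely diverge is in the central step that feeds the hypothesis into the bound. The paper stays inside the primary decomposition: setting $W_{m,i}=A/N_{m,i}$ and $V_{m,i}=W_{m,i}/W_{m,i}P_{m,i}$, it shows $\ann_R(V_{m,i})=P_{m,i}$, embeds the cyclic module $R/P_{m,i}$ into $V_{m,i}$ (a quotient of $A\otimes_{\mathbb{Z}_p[[H_0]]}\mathbb{Z}_p$), and so bounds $d(R/P_{m,i})$ by $\delta$, realising $Q_{m,i}$ as a finite cyclic subgroup of ${\rm GL}_\delta(\mathbb{Z}_p)$. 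You instead use only $\mathfrak p\supseteq\ann_R(A)$, i.e.\ $\mathfrak p\in{\rm Supp}(A)$, pass to the coinvariants $A_{H_0}$ of $\mathbb{Z}_p$-rank at most $C$, and read off that $\zeta$ is an eigenvalue of $\bar q$ on $A_{H_0}\otimes\mathbb{Q}_p$, whence $[\mathbb{Q}_p(\zeta):\mathbb{Q}_p]\le C$ and $p^{e-1}(p-1)\le C$. Your route is arguably cleaner: it never needs the components $N_{m,i}$ or the fact that $P_{m,i}$ is an associated (rather than merely supporting) prime, and the explicit cyclotomic degree $p^{e-1}(p-1)$ gives a concrete bound where the paper invokes Lazard as a black box; the paper's version, in exchange, produces the intermediate statement $\sup d(R/P_{m,i})=\delta<\infty$, which it records as a "weak form" of the conclusion. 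One point worth making explicit in your write-up: the claim that $A$ (respectively $A_{H_0}$) is generated by at most $C$ elements over $\mathbb{Z}_p[[H_0]]$ (respectively $\mathbb{Z}_p$) uses the topological Nakayama lemma for compact modules over a local pro-$p$ ring, as in \cite[Lemma 7.2.2]{Wbook}; this is standard and the paper relies on the same fact, but it is not the naive Nakayama for abstract finitely generated modules.
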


\begin{proof}
Consider $P_{m,i} \not= P$.
Write $W_{m,i} = A / N_{m,i}$ and note that by \cite[Prop.~3.9]{eisenbud} $P_{m,i}$ acts nilpotently on $W_{m,i}$ i.e. there is some $s_{m,i} \in \mathbb{N}$ such that $W_{m,i} P_{m,i}^{s_{m,i}} = 0$. We claim that
$$
\ann_R(V_{m,i}) = P_{m,i} \hbox{ where } V_{m,i}  = W_{m,i} / W_{m,i} P_{m,i}.
$$
To see this, note
 that for any ideal $S\ns R$ that strictly contains $P_{m,i}$, the radical
  $\sqrt{S}$ is the intersection of the minimal elements in the set of
   prime ideals containing $S$. We are in Krull dimension one, so  $\sqrt{S} = P$. 
     Since $R$ is Noetherian, 
   we have $\sqrt{S}^z \subseteq S$ for some $z \in \mathbb{N}$, and therefore $S$ has finite index in $R$ (as an abelian subgroup).
 Thus, if $\ann_R(V_{m,i})$ were to contain $P_{m,i}$ as a proper ideal, then $\ann_R(V_{m,i})$ has finite index in $R$, forcing $V_{m,i}$ and $W_{m,i}$ to be finite. But this would mean that the domain $R/P_{m,i}$
 was finite, hence a field. Since $P$ is the unique maximal ideal in $R$, this would
 imply $P_{m,i} = P$, contrary to hypothesis.

By Lemma \ref{claim1}, there is $H_0 \in {\mathcal B}$ such that $H_0 - 1 \subseteq P_{m,i}$. Then $H_0$ acts trivially on $V_{m,i}$ and so $V_{m,i} \cong V_{m,i} {\otimes}_{\mathbb{Z}_p[[H_0]]} \mathbb{Z}_p$, which is a quotient of $A {\otimes}_{\mathbb{Z}_p[[H_0]]} \mathbb{Z}_p$. 
 By assumption
$$
\sup_{H \in {\mathcal B}} \dim_{\mathbb{F}_p} (A {\otimes}_{\mathbb{Z}_p[[H]]} \mathbb{F}_p) < \infty.
$$
Recall that the {\bf minimal number of
generators} of a  pro-$p$ group $B$ is   $$d(B)= \dim_{\mathbb{F}_p} B/B' B^p.$$
 Since $P_{m,i}$ is an associated prime for $V_{m,i}$ we deduce that $R/ P_{m,i}$ embeds in $V_{m,i}$, so 
$$
d(R/P_{m,i}) \leq d(V_{m,i}) =  d(V_{m,i} {\otimes}_{\mathbb{Z}_p[[H_0]]} \mathbb{Z}_p) \leq  d(A {\otimes}_{\mathbb{Z}_p[[H_0]]} \mathbb{Z}_p) = \dim_{\mathbb{F}_p} (A{\otimes}_{\mathbb{Z}_p[[H_0]]} \mathbb{F}_p).
$$
At this stage we have established the following weak form of the conclusion $|X|<\infty$ that we seek:
$$
\delta : = \sup_{P_{m,i} \in X} d(R/P_{m,i}) \leq \sup_{H
  \in {\mathcal B}} \dim_{\mathbb{F}_p} (A
{\otimes}_{\mathbb{Z}_p[[H]]} \mathbb{F}_p) < \infty.
$$

As before, we write $F_{m,i}$ for the  abstract field of fractions of the
domain $R/P_{m,i}$. (Remember that $R= \mathbb{Z}_p[[Q]]$.)
 Let $Q_{m,i}$ be the image of $Q$ in
$R/P_{m,i}$. From Lemma \ref{claim1} we know 
that $Q_{m,i}$ is  finite cyclic, of $p$-power order.

If the characteristic of $F_{m,i}$ was positive, it would be $p$. But then, as in the proof of 
Lemma \ref{claim1}, we would get  $P_{m,i} = P$, a contradiction.
Therefore, $F_{m,i}$ has
characteristic 0, in which case 
$R/P_{m,i} \cong \mathbb{Z}_p^{d(j)}$ for some
$d(j) \leq \delta$, and $Q_{m,i}$ occurs as a finite cyclic subgroup of
${\rm{GL}}_\delta(\mathbb{Z}_p)$. This is a
virtually pro-$p$ analytic, and therefore it has  finite virtual cohomological dimension \cite{lazard}. 
(We refer to \cite{analytic} for background on pro-$p$ analytic groups.)  
It follows that there is an upper bound on the finite
subgroups of ${\rm{GL}}_d(\mathbb{Z}_p)$, yielding an integer
$r$ (a power of $p$) such that the order of each $Q_{m,i}$ always divides $r$. 

We have proved 
$$
(Q^r-1) \subseteq P_{m,i}\ \ \ \text{for all}\ \ \ \ P_{m,i}\in X,
$$
and the lemma now follows from the observation that  
in $R=\mathbb{Z}_p[[Q]]$ there are only finitely many  prime ideals 
above $(Q^{r} - 1)$; equivalently, there are only
finitely many minimal prime ideals in the local ring
$\mathbb{Z}_p[[Q]]/ (Q^{r} - 1) \cong \mathbb{Z}_p[Q/
Q^{r}]$.  
 To see that this is the case, note that since
$\mathbb{Z}_p[Q/ Q^r]$ is finitely generated as a $\mathbb{Z}_p$-module, its  Krull dimension equals that of $\mathbb{Z}_p$, which is 1. 
Thus  any prime ideal in $\mathbb{Z}_p[Q/ Q^r]$ that is not minimal is the unique maximal prime  ideal. 
And there are only finitely many minimal prime ideals.
\end{proof}

\begin{lemma}\label{claim3} For each $p$-power $m$,
define $J_m = \ann_R(A / A_m)$ and $
B_m = J_m \otimes_{{\mathbb Z}_p} {\mathbb{Q}_p}
$. Then, there is a $p$-power $m_0 \geq 1$  such that   $B_m = B_{m_0}$ for all $m\ge m_0$.
\end{lemma}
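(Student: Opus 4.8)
The plan is to regard the $B_m$ as a chain of ideals inside the Noetherian ring $R_{\mathbb Q}:=R\otimes_{\mathbb Z_p}\mathbb Q_p=R[1/p]$ and to show that this chain is eventually constant. Since $\Omega_{p^{s+1}}\subseteq\Omega_{p^{s}}$ we have $A_{p^{s+1}}\subseteq A_{p^{s}}$, hence a surjection $A/A_{p^{s+1}}\onto A/A_{p^{s}}$ and therefore $J_{p^{s+1}}\subseteq J_{p^{s}}$; as $\mathbb Q_p$ is flat over $\mathbb Z_p$, the chain $(B_m)$ is \emph{descending}. Because $R_{\mathbb Q}$ is Noetherian but need not satisfy the descending chain condition, the whole point is to extract enough finiteness from Lemma~\ref{claim2} to force stabilization.

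First I would pass to annihilators of modules. Flatness of $\mathbb Q_p$ over $\mathbb Z_p$ makes localization commute with the formation of annihilators of the finitely generated module $A/A_m$, so $B_m=\ann_{R_{\mathbb Q}}(M_m)$ with $M_m:=(A/A_m)\otimes_{\mathbb Z_p}\mathbb Q_p$. The associated primes of $A/A_m$ lie in the finite set $X$ of Lemma~\ref{claim2}. Those equal to the maximal ideal $P$ contain $p$ and hence disappear in $R_{\mathbb Q}$; each remaining $P_l\in X$ satisfies $R/P_l\cong\mathbb Z_p^{d}$ (as in the proof of Lemma~\ref{claim2}), so $\bar P_l:=P_lR_{\mathbb Q}$ is a maximal ideal of $R_{\mathbb Q}$ with $R_{\mathbb Q}/\bar P_l\cong(R/P_l)[1/p]$ a finite field extension of $\mathbb Q_p$. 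Thus every $M_m$ is supported on the fixed finite set $\{\bar P_1,\dots,\bar P_K\}$ of maximal ideals and so has finite length; consequently $B_m=\bigcap_l\bigl(B_mS_l\cap R_{\mathbb Q}\bigr)$ where $S_l:=(R_{\mathbb Q})_{\bar P_l}$, and it suffices to prove that each localized ideal $B_mS_l$ is eventually constant.

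The key gain comes from a computation in the local ring $S_l$. By the proof of Lemma~\ref{claim2} there is a single $p$-power $r=p^{c}$ with $Q^{r}-1\subseteq P$ for every $P\in X$; in particular $\pi_i:=q_i^{r}-1$ lies in the maximal ideal $\mathfrak m_l$ of $S_l$, while $p$ is a \emph{unit} in $S_l$. For $s\ge c$ I would write, for each topological generator $q_i$ of $Q$,
$$
q_i^{p^{s}}-1=(1+\pi_i)^{p^{s-c}}-1=\pi_i\Bigl(p^{s-c}+\tbinom{p^{s-c}}{2}\pi_i+\cdots+\pi_i^{\,p^{s-c}-1}\Bigr).
$$
The cofactor reduces to $p^{s-c}\neq0$ modulo $\mathfrak m_l$, hence is a unit in $S_l$; therefore $(q_i^{p^{s}}-1)S_l=\pi_iS_l$, and the extended augmentation ideal $\Omega_{p^{s}}S_l=(\pi_1,\dots,\pi_n)S_l$ is \emph{independent of} $s\ge c$. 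It follows that $(A_{p^{s}})S_l=A_{S_l}\cdot\Omega_{p^{s}}S_l$ (writing $A_{S_l}:=A\otimes_R S_l$), the quotient $(M_{p^{s}})_{\bar P_l}=A_{S_l}/(A_{p^{s}})S_l$, and hence $B_{p^{s}}S_l=\ann_{S_l}\bigl((M_{p^{s}})_{\bar P_l}\bigr)$, are all constant for $s\ge c$. Since the \emph{same} $c$ serves every $\bar P_l$, we obtain $B_{p^{s}}=B_{p^{c}}$ for all $s\ge c$, so $m_0=p^{c}$ works.

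The only real obstacle is this local computation, and I would emphasize exactly where the hypothesis enters. Globally the ideals $\Omega_{p^{s}}R_{\mathbb Q}$ do \emph{not} stabilize---for $A=R$ one has $\dim_{\mathbb Q_p}M_{p^{s}}=p^{sn}\to\infty$---because $R_{\mathbb Q}=R[1/p]$ is not local and the cofactors above are not units there. What rescues the argument is Lemma~\ref{claim2}: the finiteness of $X$ and the resulting \emph{uniform} $r$ with $Q^{r}-1$ inside every relevant prime place all the $\pi_i$ in each $\mathfrak m_l$, and only after localizing at $\bar P_l$, where $p^{s-c}$ becomes a unit, do the cofactors become invertible. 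Finally I would note that the same calculation shows each $(M_{p^{s}})_{\bar P_l}$, and hence $M_{p^{s}}$ itself, is constant for $s\ge c$; this is precisely the uniform bound on $\rk\bigl(A\otimes_{\mathbb Z_p[[Q^{p^{s}}]]}\mathbb Z_p\bigr)=\dim_{\mathbb Q_p}M_{p^{s}}$ required by Proposition~B, so Lemma~\ref{claim3} and its intended application rest on one and the same computation.
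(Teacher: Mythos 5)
Your proof is correct, but it follows a genuinely different route from the paper's. The paper shows that the sets $X_m\setminus\{P\}$ of associated primes are eventually constant, deduces that the radicals $\sqrt{J_m}$ stabilize, and then upgrades this to $B_m=B_{mp}$ via a Bezout/gcd computation with $x^{mp}-1$ and $(x^m-1)^s$ in $\mathbb{Q}_p[x]$, transplanted verbatim from the discrete-group argument of \cite{BK}. You instead invert $p$ and localize at the finitely many maximal ideals $\bar P_l=P_lR[1/p]$ that can support any $(A/A_m)\otimes_{\mathbb{Z}_p}\mathbb{Q}_p$ --- this is where Lemma \ref{claim2} enters for you, through the finiteness of $X$ and the uniform $p$-power $r=p^c$ with $Q^r-1\subseteq P_l$ --- and you observe that in each local ring $S_l$ the generators $q_i^{p^s}-1=\pi_i\cdot(\mathrm{unit})$ of the extended augmentation ideal cut out the fixed ideal $(\pi_1,\dots,\pi_n)S_l$ for all $s\ge c$, since the cofactor is $p^{s-c}$ plus an element of $\mathfrak{m}_l$ and $p$ is a unit there. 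The auxiliary facts you invoke (annihilators of finitely generated modules commute with localization; a finitely generated module supported at finitely many maximal ideals injects into the product of its localizations, whence $B_m=\bigcap_l(B_mS_l\cap R[1/p])$; $(R/P_l)[1/p]$ is a field, being a finite-dimensional $\mathbb{Q}_p$-algebra that is a domain) are all standard and correctly used, and the degenerate case $X\subseteq\{P\}$, where every $M_m$ vanishes, is harmless. What your route buys is a strictly stronger conclusion: the localized modules $((A/A_{p^s})\otimes\mathbb{Q}_p)_{\bar P_l}$, and hence $(A/A_{p^s})\otimes\mathbb{Q}_p$ itself, are literally constant for $s\ge c$, which yields the bound asserted in Proposition B directly and makes the generator-counting step at the end of the paper's proof of Proposition B redundant. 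What the paper's route buys is an argument that runs exactly in parallel with the discrete case and never needs to leave the ambient ring $R$ or discuss supports and localizations.
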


\begin{proof} 
Note that if  $m_1  \geq m_2$ are powers of $p$, then $A/ A_{m_2}$ is a quotient of $A/A_{m_1}$, hence $J_{m_1} \subseteq J_{m_2}$.
    We claim that
\begin{equation} \label{fortaleza} X_{m_2} \setminus \{ P \} \subseteq X_{m_1} \setminus \{ P \},
\end{equation} where $P$ is the unique maximal ideal of $\mathbb{Z}_p[[Q]]$. Indeed by the paragraph after (\ref{inclusion1}) any minimal prime ideal $P_{m_2,i}$ above $J_{m_2}$  such  that $P_{m_2,i} \not= P$ belongs to $X_{m_2}$ and any element of $X_{m_2} \setminus \{ P \}$ is of this type.  Thus there is no prime ideal of $\mathbb{Z}_p[[Q]]$ that contains $J_{m_1}$ and is strictly contained in $P_{m_2,i}$ (otherwise there would be a nested sequence of three  primes ideals above $J_{m_1}$, which would contradict the paragraph after (\ref{inclusion1}) with $m = m_1$). Thus $P_{m_2,i}$ is a minimal prime above $J_{m_1}$, so $P_{m_2,i} \in X_{m_1}$ and (\ref{fortaleza}) holds.

  By Lemma \ref{claim2}  $X = \cup_m X_m$ is a finite set of primes. This together with (\ref{fortaleza})  implies the existence of $m_0$, a power of $p$, 
such that 
\begin{equation} \label{fortaleza2}
X_{m_0} \setminus \{ P \} = X_{m} \setminus \{ P \} \hbox{ for } m \geq m_0 \hbox{ a power of }p.
\end{equation}
If $m\ge m_0$ is a power of $p$ and $X \not= \{ P \}$, then 
\begin{equation} \label{radical} \sqrt{J_m} = \cap_{I \in X_m} I
= 
 \cap_{I \in X_m \setminus \{ P \}} I = \cap_{I \in X_{m_0} \setminus \{ P \}} I =   \cap_{I \in X_{m_0} } I= \sqrt{J_{m_0}}\end{equation} 
because, as we noted earlier, that for any ideal $S\ns R$, the radical
  $\sqrt{S}$ is the intersection of the minimal elements in the set of
   prime ideals containing $S$.  Note that in the case $X = \{ P \}$ we have $X_m = X_{m_0} = X$, so
   \begin{equation} \label{radical21}\sqrt{J_m} = \cap_{I \in X_m} I
=   \cap_{I \in X_{m_0} } I= \sqrt{J_{m_0}} \hbox{ for } m \geq m_0 \hbox{ a }\hbox{power of } p.
   \end{equation}

   
   With equalities (\ref{radical}) and (\ref{radical21})  in hand,
   the proof of Claim 2 in \cite[Thm~3.1]{BK} applies verbatim after replacing
   $\mathbb{Q}$ with $\mathbb{Q}_p$ and $\mathbb{Z}$ with $\mathbb{Z}_p$. We repeat the
   argument for the convenience of the reader. 
It suffices to show that $B_m = B_{mp}$ for $m \geq m_0$. Fix $q \in Q$ and write $y$ for its image in $R \otimes_{\mathbb{Z}_p} {\mathbb{Q}}_p$.  Then $y^ m - 1 \in B_m$  and by (\ref{radical}) there is a positive integer $s$ such that $(y^m - 1)^s \in B_{mp}$.  The greatest common divisor 
of $x^{mp} - 1$ and $(x^m-1)^s$ in the polynomial ring ${\mathbb{Q}_p} [x]$ is $x^m-1$,
because $x^{mp}-1$ has no repeated roots in a field of charateristic 0. Therefore,
  $y^m-1 \in B_{mp}$, by Bezout's lemma, and hence $B_m = B_{mp}$. 
  \end{proof}
  
\noindent{\em{Proof of Proposition B.}} We fix a finite set of generators $a_1, \ldots , a_s$ for
$A$ as an $R$-module. 
Then $A / A_m$ is generated as an $R / J_m$-module by the images of $a_1, \ldots , a_s$ and we
have an epimorphism of $R/ J_m$-modules
$
(R/ J_m)^s \to A / A_m
$ inducing an epimorphism of finite dimensional $\mathbb{Q}_p$-modules
$$
(R/ J_m)^s \otimes_{\mathbb{Z}_p} \mathbb{Q}_p \to (A / A_m) \otimes_{\mathbb{Z}_p} \mathbb{Q}_p.
$$
Thus
$$
\dim_{\mathbb{Q}_p} (A / A_m) \otimes_{\mathbb{Z}_p} \mathbb{Q}_p \leq s \dim_{\mathbb{Q}_p} ((R/ J_m) \otimes_{\mathbb{Z}_p} \mathbb{Q}_p ).
$$
By Lemma \ref{claim3}, for $p$-powers
$m \geq m_0$  the canonical projection $R / J_{mp} \to R/ J_m$ induces an isomorphism 
$
(R / J_{mp}) \otimes_{\mathbb{Z}_p} \mathbb{Q}_p \to (R / J_m) \otimes_{\mathbb{Z}_p} \mathbb{Q}_p,
$
so 
$$
\dim_{\mathbb{Q}_p} ((R/ J_m) \otimes_{\mathbb{Z}_p} \mathbb{Q}_p) =
\dim_{\mathbb{Q}_p} ((R/ J_{m_0}) \otimes_{\mathbb{Z}_p} \mathbb{Q}_p). 
$$
Thus for ${\mathcal C} = \{ p^i \mid i \geq 1 \}$ we have
$$
\sup_{m \in {\mathcal C} }\dim_{\mathbb{Q}_p} ((A / A_m) \otimes_{\mathbb{Z}_p} \mathbb{Q}_p) \leq s. \dim_{\mathbb{Q}_p} ((R/ J_{m_0}) \otimes_{\mathbb{Z}_p} \mathbb{Q}_p) < \infty.
$$
And, by definition, $A / A_m = A\otimes_{\Z_p[[Q^m]]}\Z_p$.
\qed

\section{Proof of Theorem C}

We shall see that in the metabelian case,  Theorem C can be deduced from Proposition B by means
of straightforward calculations in homological algebra. The following proposition reduces the general
case to the metabelian case. 
For a pro-$p$ group $H$ let 
 $\{ \gamma_i(H) \}_{i \geq 1}$ be the lower central
series of $H$. By definition $\gamma_1(H) = H$,  $\gamma_{i+1}(H)$ is the closed subgroup of $H$ generated by $[\gamma_i(H), H]$.


\begin{prop} \label{p:nilpotentpro-p} Let $N \into G \onto Q$ 
be a short exact sequence of pro-$p$ groups, where $N$ is nilpotent, 
$Q$ is abelian and $G$ is finitely generated.  
Let $G_n$ be a pro-$p$
subgroup of finite index in $G$ and let $\underline{G}_n$ be the
image of $G_n$ in the metabelian group $G / N'$. Then
$$
H_1(G_n, \mathbb{Z}_p) {\otimes}_{\mathbb{Z}_p}
 \mathbb{Q}_p \cong H_1(\underline{G}_n, \mathbb{Z}_p) {\otimes}_{\mathbb{Z}_p} \mathbb{Q}_p.
$$
\end{prop}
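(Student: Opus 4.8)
The plan is to show that passing from $G_n$ to its image $\underline{G}_n = G_n/(G_n\cap N')$ in $G/N'$ changes $H_1(-,\Z_p)$ only by $\Z_p$-torsion, so that tensoring with $\Q_p$ kills the discrepancy. Writing $N_n = G_n\cap N$, the short exact sequence $N_n\into G_n\onto \overline{Q}_n$ (where $\overline{Q}_n$ is the image of $G_n$ in $Q$) gives a five-term exact sequence in homology, and the analogous sequence for $\underline{G}_n$ has $N_n/(N_n\cap N') = N_n/N_n'$-type quotient in place of $N_n$. Since $Q$ is abelian and appears unchanged in both extensions, I would compare the two five-term sequences and reduce everything to understanding the map $H_1(N_n,\Z_p)_{\overline{Q}_n} \to H_1(N_n/N_n',\Z_p)_{\overline{Q}_n}$ on coinvariants.

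First I would set up the Lyndon--Hochschild--Serre five-term exact sequence for the pro-$p$ extension $N_n\into G_n\onto \overline{Q}_n$, namely
$$
H_2(\overline{Q}_n,\Z_p)\to H_0(\overline{Q}_n, H_1(N_n,\Z_p))\to H_1(G_n,\Z_p)\to H_1(\overline{Q}_n,\Z_p)\to 0,
$$
and the parallel sequence for $\underline{G}_n$, in which $N_n$ is replaced by its maximal abelian quotient $N_n^{ab}=N_n/N_n'$ (the image of $N_n$ in $N/N'$). The two sequences map to one another via the quotient $G_n\onto\underline{G}_n$, with identical outer terms $H_1(\overline{Q}_n,\Z_p)$ and $H_2(\overline{Q}_n,\Z_p)$. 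The crux is therefore the induced map on the $H_0(\overline{Q}_n,-)$ terms, i.e. on coinvariants of $H_1(N_n,\Z_p)=N_n^{ab}$. But $H_1(N_n,\Z_p)=N_n/N_n'$ already equals $N_n^{ab}$, so after abelianizing $N_n$ the two middle terms literally coincide; the genuine content is that $N_n\cap N' = N_n'$ up to finite index, equivalently that $N_n^{ab}$ and the image of $N_n$ in $N/N'$ differ only by a finite (hence $\Z_p$-torsion) group.

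The main obstacle is this last point: controlling the difference between $N_n' $ and $N_n\cap N'$. Here I would exploit that $N$ is nilpotent and $G$ is finitely generated, so $N/N'$ is a finitely generated $\Z_p[[Q]]$-module and the commutator structure is governed by the lower central series $\gamma_i(N)$. Using a commutator-calculus argument — expressing elements of $N'$ via finitely many generators and the nilpotency of $N$ — I expect to show that $N_n\cap N'$ and $N_n'$ have the same image in the torsion-free part of $N/N'$, so that the kernel and cokernel of $N_n^{ab}\to (N_n \text{ in } N/N')$ are $p$-power torsion of bounded exponent, hence killed by $\otimes_{\Z_p}\Q_p$. Combining this with exactness of $-\otimes_{\Z_p}\Q_p$ (it is flat), the map of five-term sequences becomes an isomorphism on every term after tensoring, and the five lemma yields the claimed isomorphism $H_1(G_n,\Z_p)\otimes\Q_p\cong H_1(\underline{G}_n,\Z_p)\otimes\Q_p$.
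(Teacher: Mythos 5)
Your overall strategy is the right one and matches the paper's in spirit: the isomorphism follows once one knows that the discrepancy between $H_1(G_n,\mathbb{Z}_p)$ and $H_1(\underline{G}_n,\mathbb{Z}_p)$ --- equivalently the image of $G_n\cap N'$ in $H_1(G_n,\mathbb{Z}_p)$, or in your formulation the kernel $(N_n\cap N')/N_n'$ of $H_1(N_n,\mathbb{Z}_p)\to H_1(N_n/(N_n\cap N'),\mathbb{Z}_p)$ --- is killed by $\otimes_{\mathbb{Z}_p}\mathbb{Q}_p$. Your five-term-sequence bookkeeping for deducing the proposition from that fact is sound, if slightly more elaborate than necessary: the paper simply observes that $H_1(\underline{G}_n,\mathbb{Z}_p)$ is the quotient of $H_1(G_n,\mathbb{Z}_p)$ by the image of $G_n\cap N'$, which avoids any appeal to the five lemma.

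The gap is that the one statement carrying all the content --- that $(N_n\cap N')/N_n'$, or at least its image in $H_1(G_n,\mathbb{Z}_p)$, has finite exponent --- is precisely the step you leave as ``I expect to show \dots by a commutator-calculus argument.'' This is where the paper's proof does its actual work, and the sketch you give does not obviously go through: note in particular that $N$ need not be finitely generated as a pro-$p$ group (only $G$ is), so ``expressing elements of $N'$ via finitely many generators'' is not available; and the phrase ``the same image in the torsion-free part of $N/N'$'' is off target, since $N_n\cap N'$ maps to zero in $N/N'$. The argument that works is a filtration by the lower central series: for each $i\ge 2$ one uses the surjection $\widehat{\otimes}^i(N/N')\onto \gamma_i(N)/\gamma_{i+1}(N)$ induced by left-normed commutators; since $V_n=(G_n\cap N)N'/N'$ has finite index in $V=N/N'$, the quotient of $\widehat{\otimes}^i V$ by the image of $\widehat{\otimes}^i V_n$ has finite exponent, hence so does the image of $G_n\cap\gamma_i(N)$ modulo the image of $\gamma_i(G_n\cap N)$ in $\gamma_i(N)/\gamma_{i+1}(N)$. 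Nilpotency of $N$ makes the filtration finite, and the finitely many finite-exponent graded pieces assemble to give the claim. Until you supply this (or an equivalent) argument, the proof is incomplete.
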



\begin{proof}
We write $Q_n=G_n N / N$ for the image of $G_n$ in $G / N$. Set $V= N/ N'$ and $V_n = (G_n \cap N) N' / N'$.
Let  $V_{n,i}$ be the image of $G_n \cap \gamma_i(N)$ in
$\gamma_i(N) / \gamma_{i+1} (N)$. Note that $V_{n,i} \cong G_n
\cap \gamma_i(N) / G_n \cap \gamma_{i+1}(N)$ and the filtration
$\{ G_n \cap \gamma_i(N) \}_{i \geq 1}$ of $G_n \cap N$ has
quotients $V_{n,i}$.

Let $W_{n,i}$ denote the image of $G_{n} \cap \gamma_i(N)$ in
$H_1(G_n, \mathbb{Z}_p)$. 
Then, for $i \geq 2$, we have that $W_{n,i}/ W_{n, i+1}$ is an image of
$V_{n,i} / \underline{\gamma_i(N \cap G_n)}$, where
$\underline{\gamma_i(N \cap G_n)}$ is the image of
$\gamma_i(N \cap G_n)$ in $\gamma_i(N) / \gamma_{i+1} (N)$.

 In this paragraph all completed tensor products are over
 $\mathbb{Z}_p$. We think of $\gamma_i(N) / \gamma_{i+1}(N)$ as a quotient of
$\widehat{\otimes}^i V$ via the map that sends $g_1N' \widehat{\otimes} \ldots
\ldots \widehat{\otimes} g_i N'$ to $[g_1, \ldots , g_i] \gamma_{i+1} (N)$, where the commutator is left-normed. Thus we get
 $$(\gamma_i(N) / \gamma_{i+1}(N)) /  \underline{\gamma_i(N \cap G_n)}$$ as an image of $\widehat{\otimes}^i V / 
\underline{\widehat{\otimes}^i V_n}$, where $\underline{\widehat{\otimes}^i V_n}$
is the image of $\widehat{\otimes}^i V_n$ in $\widehat{\otimes}^i V$. Since $V_n$ has finite index in $V$, by induction on $i$ we see that $\widehat{\otimes}^i V / \underline{\widehat{\otimes}^i V_n}$ has finite exponent as an abelian group, hence 
 $(\gamma_i(N) / \gamma_{i+1}(N)) /  \underline{\gamma_i(N \cap G_n)}$ and its abelian subgroup  $V_{n,i} / \underline{\gamma_i(N \cap G_n)}$ have finite exponent as abelian groups for $i \geq 2$. Then
 $(V_{n,i} / \underline{\gamma_i(N \cap G_n)}) \otimes_{\mathbb{Z}_p}
 \mathbb{Q}_p = 0$, hence $(W_{n,i}/ W_{n,i+1}) \otimes_{\mathbb{Z}_p}
 \mathbb{Q}_p = 0$ for $i \geq 2$. Then $W_{n,2} \otimes_{\mathbb{Z}_p}
 \mathbb{Q}_p = 0$, so 
the image of $G_n \cap \gamma_2(N)$ in $
H_1(G_n, \mathbb{Z}_p) \otimes_{\mathbb{Z}_p}  \mathbb{Q}_p$ is trivial. 
And for the image $\underline{G}_n$ of $G_n$ in $G / N'$ we have  $H_1(G_n, \mathbb{Z}_p) \otimes_{\mathbb{Z}_p}
 \mathbb{Q}_p  \cong H_1(\underline{G}_n, \mathbb{Z}_p) \otimes_{\mathbb{Z}_p}
 \mathbb{Q}_p $.
\end{proof}


\begin{lemma} \label{novo12} Let $N \into G \onto Q$ 
be a short exact sequence of pro-$p$ groups with $G$ finitely generated.  Then  

\medskip
\noindent{\rm{(i)}}
$
d(G) \leq \dim_{\mathbb{F}_p} H_0(Q, H_1(N, \mathbb{F}_p)) + \dim_{\mathbb{F}_p} H_1(Q, \mathbb{F}_p) \leq d(G) + \dim_{\mathbb{F}_p} H_2(Q, \mathbb{F}_p);
$

\medskip
\noindent{\rm{(ii)}} $\dim_{\mathbb{Q}_p} H_1(G, \mathbb{Z}_p) \otimes \mathbb{Q}_p \leq \dim_{\mathbb{Q}_p} H_0(Q, H_1(N, \mathbb{Z}_p)) \otimes \mathbb{Q}_p + \dim_{\mathbb{Q}_p} 
H_1(Q, \mathbb{Z}_p) \otimes \mathbb{Q}_p$,

\medskip


where all tensor products are over $\Z_p$.
\end{lemma}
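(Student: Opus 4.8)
The plan is to derive both statements from the five-term exact sequence in homology attached to the extension $N \into G \onto Q$. For a trivial pro-$p$ coefficient module $M$ (we only need $M = \mathbb{F}_p$ and $M = \mathbb{Z}_p$), the Lyndon--Hochschild--Serre spectral sequence $E^2_{pq} = H_p(Q, H_q(N, M)) \Rightarrow H_{p+q}(G, M)$ produces the exact sequence
$$H_2(G, M) \to H_2(Q, M) \mapnew{d} H_0(Q, H_1(N, M)) \mapnew{\alpha} H_1(G, M) \mapnew{\beta} H_1(Q, M) \to 0,$$
where $\beta$ is induced by $G \onto Q$ and $d$ is the transgression. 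Here I use that, for trivial $M$, one has $H_0(N, M) = M$ with trivial $Q$-action, so that $E^2_{2,0} = H_2(Q, M)$ and $E^2_{1,0} = H_1(Q, M)$, while $E^2_{0,1} = H_0(Q, H_1(N, M))$ is the module of coinvariants. Everything then reduces to exactness and dimension counting.

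For (ii) I would set $M = \mathbb{Z}_p$ and apply $- \otimes_{\mathbb{Z}_p} \mathbb{Q}_p$. Since $\mathbb{Q}_p$ is a localization of $\mathbb{Z}_p$ it is flat, so this functor is exact and the tail
$$H_0(Q, H_1(N, \mathbb{Z}_p)) \otimes \mathbb{Q}_p \mapnew{\alpha} H_1(G, \mathbb{Z}_p) \otimes \mathbb{Q}_p \mapnew{\beta} H_1(Q, \mathbb{Z}_p) \otimes \mathbb{Q}_p \to 0$$
stays exact (all tensor products over $\mathbb{Z}_p$). As $\beta$ is surjective and $\operatorname{im}\alpha = \ker\beta$, counting $\mathbb{Q}_p$-dimensions gives $\dim H_1(G,\mathbb{Z}_p)\otimes\mathbb{Q}_p = \dim\ker\beta + \dim H_1(Q,\mathbb{Z}_p)\otimes\mathbb{Q}_p \le \dim H_0(Q, H_1(N,\mathbb{Z}_p))\otimes\mathbb{Q}_p + \dim H_1(Q,\mathbb{Z}_p)\otimes\mathbb{Q}_p$, which is precisely (ii).

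For (i) I would take $M = \mathbb{F}_p$ and recall $d(\,\cdot\,) = \dim_{\mathbb{F}_p} H_1(\,\cdot\,, \mathbb{F}_p)$. The same two facts ($\beta$ surjective, $\operatorname{im}\alpha = \ker\beta$) immediately yield the left-hand inequality $d(G) \le \dim H_0(Q, H_1(N,\mathbb{F}_p)) + \dim H_1(Q,\mathbb{F}_p)$. For the right-hand inequality, exactness at $H_2(Q,\mathbb{F}_p)$ lets me replace that term by the cokernel $C$ of $H_2(G,\mathbb{F}_p) \to H_2(Q,\mathbb{F}_p)$, giving a genuine four-term exact sequence
$$0 \to C \mapnew{\bar d} H_0(Q, H_1(N,\mathbb{F}_p)) \mapnew{\alpha} H_1(G,\mathbb{F}_p) \mapnew{\beta} H_1(Q,\mathbb{F}_p) \to 0.$$
The alternating sum of $\mathbb{F}_p$-dimensions vanishes, so $\dim H_0(Q, H_1(N,\mathbb{F}_p)) + \dim H_1(Q,\mathbb{F}_p) = d(G) + \dim C \le d(G) + \dim H_2(Q,\mathbb{F}_p)$, as required.

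The only points requiring care — rather than a genuine obstacle — are establishing the Lyndon--Hochschild--Serre spectral sequence with these $E^2$-terms for continuous (pro-$p$) homology, so that $H_0(Q, H_1(N, M))$ really is the coinvariant term appearing in it, and the finiteness bookkeeping for (i). Since $G$ is finitely generated, $H_1(G,\mathbb{F}_p)$ is finite-dimensional and $Q$, being a quotient of $G$, is finitely generated as well, so $H_1(Q,\mathbb{F}_p)$ is finite; reading all inequalities in $[0,\infty]$, the four-term exactness above forces the two sides of the right-hand inequality to be simultaneously finite or infinite, so the bound is valid in every case.
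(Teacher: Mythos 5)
Your proof is correct and follows essentially the same route as the paper: both parts are read off from the five-term exact sequence of the extension (the paper simply cites it), with (ii) obtained by tensoring the three-term tail with the flat module $\mathbb{Q}_p$ and (i) by counting $\mathbb{F}_p$-dimensions in the four-term sequence. Your extra care about the spectral-sequence identification of $H_0(Q,H_1(N,M))$ as the coinvariants term and about the possibly infinite dimensions in (i) is sound but goes beyond what the paper records.
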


\begin{proof} Recall that for any finitely generated pro-$p$ group, $d(G)=\dim H_1(G,\F_p)$.
Part (i) follows immediately  from the following exact sequence (which is part of the 5-term exact sequence in homology)  $$ H_2(Q, \mathbb{F}_p) \to  H_0(Q, H_1(N, \mathbb{F}_p)) \to H_1(G, \mathbb{F}_p) \to H_1(Q, \mathbb{F}_p) \to 0.$$ Part (ii) follows from the exact sequence (which is also part of a 5-term exact sequence) $H_0(Q, H_1(N, \mathbb{Z}_p)) \to H_1(G, \mathbb{Z}_p) \to H_1(Q, \mathbb{Z}_p) \to 0$.
\end{proof}

\noindent{\bf{Proof of Theorem C.}}
We start  with a short exact sequence of pro-$p$ groups $M\into G\onto G/M$
 with $M$ nilpotent and $G/M$ torsion-free and abelian. Since $G/G'$ is a finitely generated abelian
pro-$p$ group, it is the direct sum  of a finite $p$-group $T$ and a free pro-$p$ abelian group $\Z_p^n$.
Let $N$ be the full pre-image of $T$ in $G$. 
Since $G/M$ is torsion-free abelian, $N \subseteq M$; in particular $N$ is nilpotent.
Thus we have a short exact
sequence of pro-$p$ groups $N\into G\onto \Z_p^n$. And by construction,
$N\subset H$ for every normal $H<G$ with $G/H\cong\Z_p$. 


Let $A=N/N'$.
If we write  $\underline{H}$ for the image of  $H<G$ in $\underline{G} = G/ N'$, then
 $d(\underline{H}) \leq d(H)$, hence  $\sup_{\underline{G} / \underline{H} \cong
    \mathbb{Z}_p} d(\underline{H}) < \infty$.     
This bound tells us that our hypothesis on $G$ is preserved if we replace $G$ by $G/N'$, and Proposition \ref{p:nilpotentpro-p} assures us that if we can prove the theorem for $\underline{G}=G/N'$, 
then the result will follow for $G$. Thus we assume henceforth that $A=N$ and $G=\underline{G}$.

 By applying
     Lemma \ref{novo12}(i) to the short exact sequence $A \into {H} \onto {H}/A \cong \mathbb{Z}_p^{n-1}$ we get 
$$ 
\sup_{ {G}/ {H} \cong \mathbb{Z}_p} \dim_{\mathbb{F}_p} (A {\otimes}_{\mathbb{Z}_p[[{H}]]} \mathbb{F}_p) 
\leq  
\sup_{{G}/ {H} \cong \mathbb{Z}_p} \huge[ d({H}) +  { {n-1}\choose {2}} -  n +  1
\huge] < \infty,
$$
since $H_0({H}, H_1(A,\F_p))=A {\otimes}_{\mathbb{Z}_p[[{H}]]} \mathbb{F}_p$ and $\dim H_1({H}/A,\F_p)=n-1$, and 
the binomial coefficient is the dimension of $H_2(H/A, \mathbb{F}_p)$, which    
 (as with discrete groups)
is the exterior square of the abelian  group $(H/A) / p (H/A) \cong\mathbb{F}_p^{n-1}$.
 
From Proposition B we deduce that
\begin{equation} \label{matricula} \sup_{i \geq 1} \rk( A {\otimes}_{\mathbb{Z}_p [[Q^{p^i}]]} \mathbb{Z}_p ) < \infty.\end{equation}
\noindent
Let $G_m$ be a pro-$p$ subgroup  of finite index in $G$ and consider the following exact sequence\footnote{The use of $\otimes$ rather than $\hat\otimes$ is permitted because the trivial
module $\Z_p$ is finitely presented over $\Z_p[[G]]$ for every finitely generated pro-$p$ group $G$.} 
$$ 
\Tor_1^{\mathbb{Z}_p[[G_m/ A \cap G_m]]}(A / A \cap G_m, \mathbb{Z}_p) \to    (A \cap G_m)  {\otimes}_{\mathbb{Z}_p[[G_m/ A \cap G_m]]} \mathbb{Z}_p \to  $$ $$ A  {\otimes}_{\mathbb{Z}_p[[G_m/ A \cap G_m]]} \mathbb{Z}_p \to (A / A \cap G_m) {\otimes}_{\mathbb{Z}_p[[G_m/ A \cap G_m]]} \mathbb{Z}_p  \to 0.$$
Because $A / A \cap G_m$ is finite, the first and last groups in this sequence are torsion, so
$$
\rk((A \cap G_m ) {\otimes}_{\mathbb{Z}_p[[G_m/ A \cap G_m]]} \mathbb{Z}_p)  = \rk(A  {\otimes}_{\mathbb{Z}_p[[G_m/ A \cap G_m]]} \mathbb{Z}_p).
$$
Then, by applying Lemma \ref{novo12}(ii) to $A \cap G_m \into G_m \onto G_m/ (A \cap G_m)$, we get
$$
\begin{aligned}
\dim_{\mathbb{Q}_p} (H_1(G_m, \mathbb{Z}_p) \otimes_{\mathbb{Z}_p} \mathbb{Q}_p ) 
&\leq n + \rk((A \cap G_m)  {\otimes}_{\mathbb{Z}_p[[G_m/ A \cap G_m]]} \mathbb{Z}_p)\\
  &= n +  \rk(A   {\otimes}_{\mathbb{Z}_p[[G_m/ A \cap G_m]]} \mathbb{Z}_p),
\end{aligned}
$$
where $n$ is the rank of $G_m/(A\cap G_m)$, which has finite index in $Q  = G / A$.
Finally, taking a $p$-power $s$ such that $Q^s \subseteq G_m / (A \cap G_m) \subseteq Q$, we have 
$$\rk (A  {\otimes}_{\mathbb{Z}_p[[G_m/ A \cap G_m]]} \mathbb{Z}_p) \leq \rk (A  {\otimes}_{\mathbb{Z}_p[[Q^s ]]} \mathbb{Z}_p)$$ and hence
$$
\dim_{\mathbb{Q}_p} (H_1(G_m, \mathbb{Z}_p) \otimes_{\mathbb{Z}_p} \mathbb{Q}_p ) \leq n + \rk (A  {\otimes}_{\mathbb{Z}_p[[Q^s ]]} \mathbb{Z}_p).$$
Then (\ref{matricula}) completes the proof.
\qed

\section{Examples}\label{s:examples}

In this section we shall present two examples to show that Theorem C fails if the coefficient field $\Q_p$ is replaced
by $\mathbb{F}_p$. We shall then explain an example due to Jeremy King \cite{King3} which shows that
Theorem C is of interest beyond the setting of finitely presented groups.

As we noted in the introduction, finite presentability for pro-$p$
groups is essentially a  homological condition. More precisely,  
a pro-$p$ group $G$ is finitely presented if and only if is of type $\FP_2$. In
the pro-$p$ setting, a group $G$ is defined to be of type $\FP_m$ if, as a trivial $\mathbb{Z}_p [[G]]$-module, $\mathbb{Z}_p$ has a projective resolution with the modules finitely generated in dimensions $\le m$.
Since $\mathbb{Z}_p[[G]]$ is a local ring, it is easy to see that $G$ is of type $\FP_m$ if and only if the pro-$p$ homology groups $H_i(G, \mathbb{F}_p)$ are finite for $i \leq m$.  

In \cite{King2} King showed that if $N$ is a normal pro-$p$ subgroup of $G$, with $G/ N$ of finite rank (in which case $\mathbb{Z}_p[[G/N]]$ is left and right Noetherian), 
then $G$ is of type $\FP_m$ if and only if $H_i(N, \mathbb{Z}_p)$ is finitely generated as 
a pro-$p$ $\mathbb{Z}_p[[G/N]]$-module for $i \leq m$, where the action of $G/N$ is induced by conjugation. In particular, if $G$ is a finitely generated metabelian pro-$p$ group, i.e.  there is a short exact sequence of pro-$p$
groups  $A \to G \to Q $ with $A$ and $Q$ abelian, then $G$ 
will be finitely presented if and only if $H_2(A, \mathbb{Z}_p) \cong A \widehat{\wedge}
A$ is finitely generated as a $\mathbb{Z}_p [[Q]]$-module,
where $Q$ acts diagonally on the completed exterior square $A \widehat{\wedge} A$.

This criterion for finitely presentability of metabelian pro-$p$ groups plays a crucial role in our examples: it is
hidden in our appeals to King's papers \cite{King1}, \cite{King3}.

\subsection{The first example: switching to $\mathbb{F}_p$ when $p$ is odd}

Suppose $p>2$
and consider the pro-$p$ group $G = A \rtimes Q$  where $A = \mathbb{F}_p[[t]]$
and  $Q = \langle x , y \rangle = \mathbb{Z}_p \times \mathbb{Z}_p$, with $x$ acting
as multiplication by $1 + t$ while $y$ acts as multiplication by $1 + 2t$.
By \cite[Thm.~A]{King3},   $G$ is finitely presented. And since $(1+\tau)^{p^i} = 1 + \tau^{p^i}$ in characteristic
$p$, we have
$$
A \otimes_{\mathbb{F}_p[[Q^{p^i}]]} \mathbb{F}_p \cong \mathbb{F}_p[[t]]/ ((1 + t)^{p^i} - 1, (1 + 2t)^{p^i} - 1) = \mathbb{F}_p[[t]]/ (t^{p^i}).
$$
 Hence
$$\sup_{M \in {\mathcal A}} \dim_{\mathbb{F}_p} (H_1(M, \mathbb{Z}_p) \otimes_{\mathbb{Z}_p} \mathbb{F}_p ) \geq  \sup_{i \geq 1}   \dim_{\mathbb{F}_p} (H_1(A \rtimes Q^{p^i}, \mathbb{Z}_p) \otimes_{\mathbb{Z}_p} \mathbb{F}_p ) = $$ $$ 2 + \sup_{i \geq 1}   \dim_{\mathbb{F}_p} (A \otimes_{\mathbb{F}_p[[Q^{p^i}]]} \mathbb{F}_p)  = 2 + \sup_{i \geq 1}  p^i  = \infty,$$ where ${\mathcal A}$ is the set of all pro-$p$ subgroups of finite index in $G$.
  
\subsection{The second example: switching to $\mathbb{F}_p$ when $p = 2$}

Consider the pro-$2$ group $G = A \rtimes Q$, where $A = \mathbb{F}_2[[t]]$
and  $Q = \langle x , y \rangle = \mathbb{Z}_2 \times \mathbb{Z}_2$, with $x$ acting
as multiplication by $1 + t$ while $y$ acts as multiplication by $1 + t + t^2$. Again,  $G$ is finitely presented by \cite[Thm.~B]{King3} and
$$
A \otimes_{\mathbb{F}_2[[Q^{2^i}]]} \mathbb{F}_2 \cong \mathbb{F}_2[[t]]/ ((1 + t)^{2^i} - 1, (1 + t + t^2)^{2^i} - 1) = \mathbb{F}_2[t]]/ (t^{2^i}).
$$
 Hence
$$\sup_{M \in {\mathcal A}} \dim_{\mathbb{F}_2} (H_1(M, \mathbb{Z}_2) \otimes_{\mathbb{Z}_2} \mathbb{F}_2 ) \geq  \sup_{i \geq 1}   \dim_{\mathbb{F}_2} (H_1(A \rtimes Q^{2^i}, \mathbb{Z}_2) \otimes_{\mathbb{Z}_2} \mathbb{F}_2 ) = $$ $$ 2 + \sup_{i \geq 1}   \dim_{\mathbb{F}_2} (A \otimes_{\mathbb{F}_2[[Q^{2^i}]]} \mathbb{F}_2)  = 2 + \sup_{i \geq 1}  2^i  = \infty,$$ where ${\mathcal A}$ is the set of all pro-$2$ subgroups of finite index in $G$.
 
\subsection{King's example}  Let $p$ be an odd prime, let
 $Q = \langle x , y \rangle = \mathbb{Z}_p \times \mathbb{Z}_p$, 
 let 
 $A = \mathbb{F}_p[[Q]]/ (x+ x^{-1} + y + y^{-1} - 4)$ and, following  
\cite{King1}, consider the  pro-$p$ group $G=A \rtimes Q$. By \cite[Prop.~3.4]{King1} $G$ is not finitely presented and by \cite[Prop.~3.5]{King1} for every normal pro-$p$ subgroup $N$ of $G$ such that $G/ N \cong\mathbb{Z}_p$ we have that $N$ is finitely generated i.e. $d(N) < \infty$. Our aim is to show that
$$
\sup_{G/N\cong\Z_p} d(N) < \infty.
$$
Theorem C can then be applied to $G$, even though $G$ is not finitely presented.

To this end, observe first that  $N = A \rtimes H$, where $H$ is a pro-$p$ subgroup of $Q$ with  $Q/ H \cong \mathbb{Z}_p$.
According to Lemma \ref{novo12}(i), we will be done if we can bound the dimension of
$$
H_0(H,\, H_1(A, {\mathbb{F}_p})) = A \widehat{\otimes}_{\mathbb{F}_p[[H]]} \mathbb{F}_p.
$$
 Since the relation that defines $A$ is symmetric with respect to $x$ and $y$, we can assume that $H = \langle x y^{- \lambda} \rangle$ for some $\lambda \in \mathbb{Z}_p$. 
 Indeed we have that $H = \langle x^{i} y^{j} \rangle$ with $i,j \in \mathbb{Z}_p$ such that the ideal of $\mathbb{Z}_p$ generated by $i$ and $j$ is the whole ring $\mathbb{Z}_p$. Then $i \notin p \mathbb{Z}_p$ or $j \notin p \mathbb{Z}_p$. Since $\mathbb{Z}_p \setminus p \mathbb{Z}_p$ is the set of invertible elements in $\mathbb{Z}_p$ we can assume that $i$ or $j$ is invertible; say $i$ is invertible. Then for $\lambda =  - j i^{-1}$
 $$
 H =  \langle x^{i} y^{j} \rangle =  \langle (x y^{j i^{-1}})^i \rangle =  \langle x y^{j i^{-1}} \rangle =  \langle x y^{ - \lambda} \rangle.
  $$
    Then
$$
A \widehat{\otimes}_{\mathbb{F}_p[[H]]} \mathbb{F}_p \cong\mathbb{F}_p[[Q]] / (x+ x^{-1} + y + y^{-1} - 4, x y ^{- \lambda} - 1 ) \cong\mathbb{F}_p[[\langle y \rangle]] / (y^{\lambda} + y^{- \lambda} + y + y^{-1} - 4)$$
where the last isomorphism sends $x$ to $y^{\lambda}$. Since $\mathbb{F}_p[[\langle y \rangle]] $ is the power series ring $\mathbb{F}_p[[t]]$ with $y = 1 + t$ we deduce that
$$
A \widehat{\otimes}_{\mathbb{F}_p[[H]]} \mathbb{F}_p \cong
\mathbb{F}_p[[t]]/ (f_\lambda)
$$ 
where $$f_\lambda = 
(1 + t)^{\lambda}  + (1+t)^{ - \lambda} + (1 + t) + (1 + t)^{-1} - 4.$$
We claim that as $\lambda$ varies, the first non-zero coefficient of $f_\lambda$ appears
in degree less than $p$, in other words,
\begin{equation}\label{key}
\sup_\lambda \inf \{i \mid f_\lambda \not\in t^{i+1}\mathbb{F}_p[[t]]\} \ < \ p.
\end{equation}
 With this bound in hand,
 writing $f \in z t^i + t^{i+1} \mathbb{F}_p[[t]]$ with $z \in \mathbb{F}_p \setminus \{ 0 \}$, 
we have $(f) = (t^i)$ and 
$$
\dim_{\mathbb{F}_p} A \widehat{\otimes}_{\mathbb{F}_p[[H]]} \mathbb{F}_p  = \dim_{\mathbb{F}_p} \mathbb{F}_p[[t]]/ (t^i) = i <p.
$$
The rest  of the proof is devoted to establishing the bound in (\ref{key}).
Given $\lambda\in \mathbb{Z}_p$, we write
$$
\lambda = z_0 + p \lambda_1 \hbox{ and } - \lambda  = a_0 + p \lambda_2
$$
where $z_0, a_0 \in \{ 0, 1, \ldots, p-1 \}$ and $ \lambda_1, \lambda_2 \in \mathbb{Z}_p$.
As $(1 + t)^{p \lambda_j} = (1 + t^p)^{\lambda_j}$ in $\mathbb{F}_p[[t^p]]$, we have
 $$
 (1 + t)^{\lambda} = (1 + t)^{z_0} ( 1 + t)^{p \lambda_1} \in (1 + t)^{z_0} + t^p \mathbb{F}_p[[t]]$$
 and
 $$
 (1 + t)^{- \lambda} = (1 + t)^{a_0} ( 1 + t)^{p \lambda_2} \in (1 + t)^{a_0} + t^p \mathbb{F}_p[[t]].$$
 Note that if   $z_0 > 0$ then $a_0+z_0=p$. In particular, taking $\lambda=z_0=1$ we have
$$
(1 + t)^{-1} \in (1 + t)^{p-1} + t^p \mathbb{F}_p[[t]].$$
In general, 
\begin{equation} \label{last} f_\lambda \in g_\lambda + t^p \mathbb{F}_p[[t]],
\end{equation}
where $g_\lambda =  (1 + t)^{z_0}+ (1 + t)^{a_0} + (1 + t) + ( 1 + t)^{p-1} - 4$.
To establish (\ref{key}) we must argue that $g_\lambda$ is a non-zero polynomial.
If $z_0 = 0$ then $a_0 = 0 $ and $g_\lambda = t + (1 + t)^{p-1} - 1  $, which is a non-zero element of $\mathbb{F}_p[t]$ because
$p>2$. If $z_0 > 0$ then 
$$
g_\lambda = (1 + t)^{z_0} + (1+ t)^{p- z_0} + (1 + t) + (1 + t)^{p-1} - 4,
$$
which is again non-zero. This completes the proof.

By calculating with binomial coefficients, one can replace (\ref{key}) by a bound that is independent of $p>2$.

 \end{document}